\newtheorem{theorem}{Theorem}[section]
\newtheorem{lemma}[theorem]{Lemma}
\newtheorem{corollary}[theorem]{Corollary}
\newtheorem*{carrier theorem}{Carrier Theorem}
\theoremstyle{definition}
\newtheorem{definition}[theorem]{Definition}
\newtheorem{numbered_definition}[theorem]{Definition}
\theoremstyle{remark}
\newcommand{\B}[1]{{\mathcal{B}_{#1}}}
\newcommand{\F}[1]{{\mathcal{F}_{#1}}}
\newcommand{\G}[1]{{\mathcal{G}_{#1}}}
\newcommand{\U}[1]{{\mathcal{U}_{#1}}}
\newcommand{\V}[1]{{\mathcal{V}_{#1}}}
\newcommand{\W}[1]{{\mathcal{W}_{#1}}}
\renewcommand{\O}[1]{{\mathcal{O}_{#1}}}
\newcommand{\df}[1]{{\bf #1}}
\newcommand{\dsup}{{d_{\sup\nolimits}}}
\DeclareMathOperator{\diam}{diam}
\DeclareMathOperator{\bst}{bst}
\DeclareMathOperator{\ost}{ost}
\DeclareMathOperator{\mesh}{mesh}
\DeclareMathOperator{\Int}{Int}
\DeclareMathOperator{\supp}{supp}
\DeclareMathOperator{\dom}{dom}
\begin{document}

\baselineskip=17pt

\title{Local $k$-connectedness of an inverse limit of polyhedra}

\author{G.~C.~Bell}
\address[G.~C.~Bell]{Department of Mathematics and Statistics, University of North Carolina at Greensboro, Greensboro, NC 27412, USA}
\email{gcbell@uncg.edu}

\author[A.~Nagórko]{A.~Nagórko}
\address[A.~Nagórko]{Faculty of Mathematics, Informatics, and Mechanics, University of Warsaw, Banacha 2, 02-097 Warszawa, Poland}
\email{amn@mimuw.edu.pl}
\thanks{This research was supported by the NCN (Narodowe Centrum Nauki) grant no. 2011/01/D/ST1/04144.}

\date{}

\subjclass[2010]{Primary 55M15, 54C55; Secondary 55M10}

\keywords{local $k$-connectedness, N\"obeling space}

\begin{abstract}
  We provide an easily verifiable condition for local $k$-connectedness
    of an inverse limit of polyhedra.
\end{abstract}

\maketitle

\section{Introduction}

  The N\"obeling space characterization theorem~\cite{ageev2007a, ageev2007b, ageev2007c, levin2006, nagorkophd} states that if a space is strongly universal in the class of $n$-dimensional separable complete metric spaces and is $k$-connected and locally $k$-connected for each $k < n$, then it is homeomorphic to the $n$-dimensional N\"obeling space $N^{2n+1}_n$.

  In geometric group theory many spaces arise naturally as inverse limits of polyhedra (simplicial complexes endowed with the metric topology).
  In particular, boundaries at infinity of hyperbolic spaces can be expressed as such.
  Striking examples of applications of the N\"obeling space characterization theorem are proofs that the boundary at infinity of the curve complex of the $(n+5)$-punctured $2$-dimensional sphere is homeomorphic to the $n$-dimensional N\"obeling space $N^{2n+1}_n$~\cite{henselprzytycki2011, gabai2014}.

  In the present paper we prove a condition for local $k$-connectedness of an inverse limit of polyhedra that is easy to verify. It is designed to aid detection of local $k$-connectedness in many examples of spaces arising in geometric group theory. We prove the following theorem.

\begin{definition}
  Let $K$ and $L$ be simplicial complexes.
  We say that a map $p \colon K \to L$ is \df{$n$-regular} if it is quasi-simplicial (i.e. it is simplicial into the first barycentric subdivision $\beta L$ of $L$) and if for each simplex $\delta$ of $\beta L$ the inverse image $p^{-1}(\delta)$ has vanishing homotopy groups in dimensions less than $n$ (regardless of the choice of  basepoint).
\end{definition}

\begin{theorem}\label{thm:main theorem}
  Let
  \[
    X = \lim_{\longleftarrow}\big( K_1 \xleftarrow{p_1} K_2 \xleftarrow{p_2} \cdots \big).
  \]
  Assume that for each $i$ the following conditions are satisfied.
  \begin{enumerate}[label=(\Roman*)]
  \item $K_i$ is a locally finite-dimensional simplicial complex endowed with the metric topology.
  \item $p_i$ is a quasi-simplicial map that is surjective and $n$-regular.
  \end{enumerate}
  Then $X$ is locally $k$-connected for each $k < n$, and each short projection $\pi^m_i \colon K_m \to K_i$ and each long projection $\pi_i \colon X \to K_i$ is a weak $n$-homotopy equivalence (it induces homomorphisms on homotopy groups in dimensions less than $n$, regardless of the choice of basepoint).
\end{theorem}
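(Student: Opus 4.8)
The plan is to reduce the whole theorem to a single \emph{strict lifting lemma} for one $n$-regular map, and then to climb: first along each bonding map $p_i$, then along the finite projections $\pi^m_i$, and finally pass to $X=\lim_{\longleftarrow}K_i$. The organizing principle is that every lift we build will be an \emph{honest} (not up-to-homotopy) commutative triangle, so that lifts constructed at successive levels of the tower assemble automatically into genuine maps into the inverse limit; this is exactly what lets us bypass any metric, mesh, or $\lim^1$ considerations. Concretely, \emph{Step 1 (one map)}: I would isolate the following. If $p\colon K\to M$ is a surjective simplicial map of locally finite-dimensional complexes (metric topology) such that the preimage of every simplex of $M$ is a subcomplex of $K$ with vanishing homotopy groups below dimension $n$, then for every polyhedron $P$ of dimension $\le n$ triangulated so that a given map $g\colon P\to M$ is simplicial, every subcomplex $P_0\subseteq P$, and every continuous $g_0\colon P_0\to K$ with $p\circ g_0=g|_{P_0}$, there is a continuous $\tilde g\colon P\to K$ extending $g_0$ with $p\circ\tilde g=g$. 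An $n$-regular surjection $p\colon K\to L$ becomes such a map with $M=\beta L$, and its restriction over any subcomplex $B$ of $\beta L$ becomes such a map with $M=B$. The proof is a skeleton-by-skeleton induction over the simplices of $P$ not in $P_0$: over a $j$-simplex $\sigma$ with $g(\sigma)=\delta$ we have $\tilde g|_{\partial\sigma}\colon\partial\sigma\to p^{-1}(\delta)$ lying over $g|_{\partial\sigma}$, and since $j-1<n$ this extends to \emph{some} map $\sigma\to p^{-1}(\delta)$, which one then corrects over $\sigma$ (subdividing $\sigma$ if needed) so that it projects exactly onto $g|_\sigma$ — for $j=1$ explicitly, using surjectivity of $p$ to produce an edge of $K$ mapping isomorphically onto $\delta$ together with path-connectedness of the preimages of the two vertices of $\delta$, and for $j\ge2$ by the usual obstruction-theoretic adjustment, the obstructions lying in the trivial groups $\pi_{\le j-1}(p^{-1}(\delta))$. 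With a little extra care the lift $\tilde g$ is taken simplicial into $\beta K$, which is what the next level of the tower needs. Taking $P=S^k$ and $P=D^{k+1}$ for $k<n$ already shows $p$ is a weak $n$-homotopy equivalence.

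\emph{Step 2 (finite projections).} By induction on $m-i$ I would prove that for every subcomplex $B$ of $\beta K_i$ the set $(\pi^m_i)^{-1}(B)$ is the carrier of a subcomplex of $K_m$ and that $\pi^m_i$ restricted over it is a weak $n$-homotopy equivalence onto $B$. The step peels off $p_{m-1}$: if $C:=(\pi^{m-1}_i)^{-1}(B)$ is a subcomplex of $K_{m-1}$, hence of $\beta K_{m-1}$, then $p_{m-1}$ restricted over $C$ falls under Step 1 (its simplex-preimages are the complexes $p_{m-1}^{-1}(\delta)$ with $\delta$ a simplex of $\beta K_{m-1}$ contained in $C$), and $(\pi^m_i)^{-1}(B)=p_{m-1}^{-1}(C)$. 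In particular each short projection $\pi^m_i\colon K_m\to K_i$ is a weak $n$-equivalence, and the preimage under $\pi^m_i$ of any contractible subcomplex of $\beta K_i$ — for instance a closed star $\bst(v,\beta K_i)$ — has vanishing homotopy groups below dimension $n$.

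\emph{Step 3 (the inverse limit).} For the long projection $\pi_i$: given $k<n$ and a class in $\pi_k(K_i)$, represent it by $g\colon S^k\to K_i$ simplicial into $\beta K_i$; lift $g$ through $p_i$ to $g_{i+1}\colon S^k\to K_{i+1}$ by Step 1, then through $p_{i+1}$, and so on, keeping each lift simplicial into the appropriate barycentric subdivision. Since $p_j{}_*$ is injective on $\pi_k$ (Step 2), each $[g_j]$ is automatically the pulled-back class, and the strictly compatible family $(g,g_{i+1},g_{i+2},\dots)$ \emph{is} a map $S^k\to X$ projecting to $g$; hence $\pi_{i*}$ is onto $\pi_k(K_i)$. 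If instead $\pi_i\circ\phi$ is null-homotopic for some $\phi\colon S^k\to X$, then so is $\pi_m\circ\phi$ for every $m$ (Step 2); starting from a null-homotopy $D^{k+1}\to K_i$ of $\pi_i\circ\phi$ and lifting it up the tower, at each level relative to the already-given map $\pi_m\circ\phi$ on $S^k$ (Step 1 in its relative form, legitimate since $\dim D^{k+1}=k+1\le n$), one gets a strictly compatible family that is a null-homotopy of $\phi$ in $X$; hence $\pi_{i*}$ is injective. For local $k$-connectedness I would run the same machinery on the open sets $\pi_m^{-1}(\ost(v,\beta K_m))$: a test map $S^k\to\pi_m^{-1}(\ost(v,\beta K_m))$ with $k<n$, pushed forward by a large $\pi^l_m$, becomes null-homotopic inside $(\pi^l_m)^{-1}(\bst(v,\beta K_m))$ by Step 2, and the null-homotopy is carried back up the tower by Step 1 and reassembled into a null-homotopy in $\pi_m^{-1}(\bst(v,\beta K_m))$; since these sets form a basis of the topology of $X$, this yields $LC^{n-1}$, i.e.\ local $k$-connectedness for every $k<n$.

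The main obstacle is Step 1, and within it the passage across a simplex $\sigma$ of dimension near $n$: one must convert an arbitrary extension $\sigma\to p^{-1}(\delta)$ of the boundary lift into one commuting with $g$ \emph{on the nose}, having only the $(n-1)$-connectedness of $p^{-1}(\delta)$ and the surjectivity of $p$ available, and one must keep the simplicial bookkeeping straight so that the lifts remain simplicial into the barycentric subdivisions that the higher bonding maps demand. A secondary, more pedestrian point appears in the local-connectedness argument: because $n$-regularity is phrased relative to $\beta K_m$ while arbitrarily small neighbourhoods in the metric topology are stars in much finer subdivisions, one needs a cofinality argument to see that the stars at the levels $\beta K_m$ already generate the topology of $X$, and to compare open stars with their closures.
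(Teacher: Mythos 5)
Your Step 1 --- the strict lifting lemma asserting that for an $n$-regular surjective (quasi-)simplicial map one can always extend a partial lift to a lift with $p\circ\tilde g=g$ \emph{on the nose} --- is false, and it is the load-bearing element of your whole architecture. A minimal counterexample: let $L$ be a single $1$-simplex with vertices $a',b'$, let $K$ be two edges $ab$ and $bc$ glued at $b$, and let $p$ send $a\mapsto a'$, $b\mapsto b'$, $c\mapsto b'$, affinely on edges. Then $p$ is simplicial and surjective, and the preimage of every simplex of $L$ is a point, an edge, or all of $K$ --- all contractible --- so your hypotheses hold for every $n$. Now take $P=[0,1]$ identified with $L$, $g=\mathrm{id}$, $P_0=\{1\}$, $g_0(1)=c$. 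Over every $t<1$ the fiber $p^{-1}(t)$ is the single point of $ab$ above $t$, so any exact lift must satisfy $\tilde g(1)=b\neq c$: no continuous $\tilde g$ with $p\circ\tilde g=g$ extending $g_0$ exists. This is precisely your inductive step over a $1$-simplex with prescribed boundary lift, so the "usual obstruction-theoretic adjustment" cannot be made to commute strictly: the obstruction is not homotopy-theoretic at all, and no subdivision or use of surjectivity repairs it (a path inside the fiber over $b'$ projects to a constant path, not to a piece of $g$). You identified this step as the main obstacle, but it is not an obstacle to be overcome --- the statement is false, which is consonant with the paper's explicit remark that $n$-regular maps need not be $n$-soft. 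Since your Steps 2 and 3 (injectivity of $\pi_{i*}$ via relative lifting of null-homotopies, the reassembly of strictly compatible families into maps to $X$, and the local-connectedness argument) all invoke this relative strict lifting, the gap propagates through the entire proof.

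The paper's route is designed exactly around this failure: its single-map lemma (Lemma~\ref{lem:single lift}) produces only an \emph{approximate} lift, one for which $p\circ g$ is $\F{}$-close to $f$ (via the Carrier Theorem applied to the $AE(n)$-cover $p^{-1}(\B{L})$ furnished by Theorem~\ref{thm:nregular pullback}), and an honest map into the inverse limit is recovered only in the limit, by making the successive approximate lifts converge. That is the entire purpose of the metric bookkeeping you hoped to bypass: the scale-$2^{-i}$ metrics make the bonding maps $1$-Lipschitz and the meshes summable (conditions (B) and (E) of Definition~\ref{def:limit}), and completeness (condition (A)) yields the limit map (Lemma~\ref{lem:limit lift}); local $k$-connectedness is then obtained through Dugundji's characterization of $ANE(n)$ rather than through a basis-of-stars argument. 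So this is not a different correct route but a proposal whose key lemma fails; any repair would have to replace strict lifting by controlled approximate lifting, at which point you are reconstructing the paper's mechanism.
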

  Note that there is no assumption of local finiteness of complexes in the statement of Theorem~\ref{thm:main theorem}.

  It is known that (under suitable assumptions) if the bonding maps in the inverse limit are $n$-soft, then the inverse limit is $k$-connected for $k < n$,~\cite{chigogidze1989}.
  The condition for local $k$-connectedness given in the present paper may be regarded as a combinatorial analog of this statement for inverse limits of polyhedra.
  Note that $n$-regular bonding maps need not be $n$-soft.
  
\section{Preliminaries}

In this section we set the basic definitions and reference known results that will be used in the later sections.

\subsection{Absolute extensors in dimension $n$}

\begin{definition}
  We say that a space $X$ is \df{$k$-connected} if each map $\varphi \colon S^k \to X$ from a $k$-dimensional sphere into $X$ is null-homotopic in $X$.
  %We let $\mathcal{C}^{n-1}$ denote the class of all spaces that are $k$-connected for each $k < n$.
\end{definition}

\begin{definition}
  We say that a space $X$ is \df{locally $k$-connected} if for each point $x \in X$ and each open neighborhood $U \subset X$ of $x$ there exists an open neighborhood $V$ of $x$ such that each map $\varphi \colon S^k \to V$ from a $k$-dimensional sphere into $V$ is null-homotopic in $U$.
%  We let $\mathcal{LC}^{n-1}$ denote the class of all spaces that are locally $k$-connected for each $k < n$.
\end{definition}

\begin{definition}
  We say that a metric space $X$ is an \df{absolute neighborhood
    extensor in dimension~$n$} if every map into $X$
  from a closed subset $A$ of an $n$-dimensional metric space extends
  over an open neighborhood of $A$. The class of absolute neighborhood
  extensors in dimension $n$ is denoted by $ANE(n)$ and its elements
  are called $ANE(n)$-spaces.
\end{definition}

\begin{definition}
We say that a metric space $X$ is an \df{absolute extensor 
  in dimension~$n$} if every map into $X$ from a closed subset 
  of an $n$-dimensional metric space $Y$ extends over the 
  entire space $Y$.
The class of absolute extensors in dimension~$n$ is denoted by
  $AE(n)$ and its elements are called $AE(n)$-spaces.
\end{definition}

\begin{definition}\label{def:AE(C)}
  Let~$\mathcal{C}$ be a class of topological spaces. 
  We let $AE(\mathcal{C})$ denote the class of \df{absolute extensors for all spaces 
  from the class $\mathcal{C}$}. We write $AE(X)$ for $AE(\{X\})$.
\end{definition}

Absolute extensors and absolute neighborhood extensors in dimension $n$ were characterized by Dugundji in the following theorem.

\begin{theorem}[\cite{dugundji1958}]\label{thm:dugundji}
  Let $X$ be a metric space. Then, 
	\begin{enumerate}
		\item $X\in ANE(n)\iff X$ is locally $k$-connected for all $k< n$; and 
		\item $X\in AE(n)\iff X\in ANE(n)$ and $X$ is $k$-connected for all $k< n$.
	\end{enumerate}
\end{theorem}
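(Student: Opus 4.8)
The plan is to prove the two equivalences by relating null-homotopies and extensions to maps of disks and to canonical maps into nerves of covers. Throughout I use that in a metrizable space every subspace satisfies $\dim(\,\cdot\,) \le \dim Y \le n$, and that a map $\varphi \colon S^k \to X$ is null-homotopic exactly when it extends over the cone $D^{k+1}$, a compact space of dimension $k+1 \le n$ whenever $k < n$.

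For the forward implication of (1), assuming $X \in ANE(n)$, I would establish local $k$-connectedness by contradiction. If it fails at a point $x_0$ relative to an open set $U$, I choose a null sequence of neighborhoods $V_i$ of $x_0$ together with maps $f_i \colon S^k \to V_i$ that are not null-homotopic in $U$. I assemble these into a single test map on a convergent sequence of pairwise disjoint $k$-spheres $\Sigma_i$ shrinking to a point $p$ inside $\mathbb{R}^{k+2}$; capping each $\Sigma_i$ with the $(k+1)$-ball $B_i$ it bounds produces a space $Y$ of dimension $k+1 \le n$ in which $A = \{p\} \cup \bigcup_i \Sigma_i$ is closed. The map sending $\Sigma_i$ by $f_i$ and $p$ to $x_0$ is continuous, so by $ANE(n)$ it extends to a map $\bar f$ on a neighborhood $W$ of $A$. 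Continuity forces $\bar f(B_i) \subseteq U$ and $B_i \subseteq W$ for all large $i$, and then $\bar f|_{B_i}$ is a null-homotopy of $f_i$ in $U$, a contradiction.

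The reverse implication of (1) is the heart of the theorem. Given $f \colon A \to X$ with $A$ closed in $Y$ and $\dim Y \le n$, I would cover $Y \setminus A$ by a locally finite open family of order $\le n+1$ (possible since $\dim(Y \setminus A) \le n$) whose members have diameter small compared with their distance to $A$, form the nerve $N$ (so $\dim N \le n$) and the canonical map $\kappa \colon Y \setminus A \to N$ via a subordinate partition of unity. I then build $g \colon N \to X$ by induction on skeleta: each vertex is sent to $f(a_\alpha)$ for a point $a_\alpha \in A$ near the corresponding cover element, and the extension over the $(k+1)$-skeleton fills the boundary $k$-sphere of each simplex using local $k$-connectedness, with the relevant small and large neighborhoods in $X$ dictated by the distance of the simplex to $A$. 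Setting $F = f$ on $A$ and $F = g \circ \kappa$ near $A$ gives the extension; continuity at points of $A$ follows from the shrinking diameters together with the continuity of $f$. Because local connectedness supplies the fillings only at small scales, the construction is guaranteed on the subcomplex of $N$ spanned by simplices close to $A$, which yields an extension over a neighborhood of $A$, precisely $ANE(n)$.

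For (2), the forward direction is immediate: $AE(n) \Rightarrow ANE(n)$ by restriction, and extending $\varphi \colon S^k \to X$ over $D^{k+1}$ (dimension $k+1 \le n$) shows $X$ is $k$-connected for $k < n$. For the reverse direction I would rerun the nerve construction of (1) globally on all of $Y \setminus A$: local $k$-connectedness again secures continuity at $A$, while the hypothesis that $X$ is $k$-connected for every $k < n$ removes the only obstruction to extending $g$ over the entire $n$-dimensional nerve $N$ (each boundary $k$-sphere, $k \le n-1$, is now null-homotopic in $X$ with no scale restriction). Hence $g \circ \kappa$ extends $f$ over all of $Y$, giving $X \in AE(n)$. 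I expect the main obstacle to be the simultaneous bookkeeping in the reverse direction of (1): choosing the cover, the partition of unity, and a nested system of neighborhoods so that the skeletal fillings remain within neighborhoods that shrink as one approaches $A$, thereby making $g \circ \kappa$ a continuous extension of $f$.
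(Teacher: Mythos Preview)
The paper does not prove this theorem at all; it is quoted as a classical result of Dugundji and simply cited to~\cite{dugundji1958}, so there is no in-paper argument to compare against.

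That said, your outline is the standard proof and is essentially correct. The forward direction of (1) via a shrinking sequence of $k$-spheres converging to a point is exactly the usual test-space trick, and your continuity check at the limit point is right. The reverse direction of (1) is the Kuratowski--Dugundji nerve argument; you have the correct ingredients (a locally finite open cover of $Y\setminus A$ of order at most $n+1$ with diameters small relative to distance to $A$, the canonical map into the nerve, and an inductive skeletal extension using local $k$-connectedness), and you have also correctly flagged the only genuinely delicate point: one must fix in advance, for each point of $A$, a nested chain of neighborhoods $U_0\supset V_0\supset U_1\supset V_1\supset\cdots$ witnessing local $k$-connectedness for $k=0,1,\ldots,n-1$, and then calibrate the cover so that the image of each simplex under the partially built $g$ lands in the appropriate $V_k$. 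Part (2) follows from (1) exactly as you say. If you want a polished written source for this argument, Hu's \emph{Theory of Retracts} or Dugundji's original paper carry out the bookkeeping in full.
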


\begin{lemma}[cf.~{\cite[Theorem 6.1.8]{Sakai_book}}]\label{lem:union is ae}
  Assume that $A_1 \subset A_2 \subset \ldots$ is a sequence of subsets of a metric space such that each $A_i$ is closed and for each $i$, $A_i \subset \Int A_{i+1}$. If for each $i$, $A_i$ is $AE(n)$, then $A = \bigcup_i A_i$ is $AE(n)$.
\end{lemma}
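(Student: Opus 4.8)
The plan is to reduce everything to Dugundji's characterization (Theorem~\ref{thm:dugundji}): since $A$ is a metric space (a subspace of the ambient metric space), it suffices to prove that $A$ is locally $k$-connected for every $k < n$ and $k$-connected for every $k < n$, after which part~(2) of that theorem yields $A \in AE(n)$. The hypotheses will enter through the sets $U_i := \Int A_{i+1} \cap A$: these are open in $A$ (hence, as $U_i \subset A_{i+1}$, also open in $A_{i+1}$), they increase with $i$ by monotonicity of the interior since $A_{i+1} \subset A_{i+2}$, and they cover $A$ — this last point being exactly where the hypothesis $A_i \subset \Int A_{i+1}$ is used, since every $x \in A$ lies in some $A_i \subset \Int A_{i+1}$. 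I also record that each $A_i$, being $AE(n)$, is $ANE(n)$ and hence locally $k$-connected for all $k < n$ by Theorem~\ref{thm:dugundji}(1), and is $k$-connected for all $k < n$ by Theorem~\ref{thm:dugundji}(2).

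For local $k$-connectedness of $A$ I would fix $k < n$ and $x \in A$, choose $i$ with $x \in U_i$, and use that $U_i$, being an open subset of the locally $k$-connected space $A_{i+1}$, is itself locally $k$-connected. Then, given an open neighborhood $O$ of $x$ in $A$, the set $O \cap U_i$ is an open neighborhood of $x$ in $U_i$, so local $k$-connectedness of $U_i$ produces a set $V$, open in $U_i$ and therefore open in $A$, with $x \in V \subset O \cap U_i$ and every map $S^k \to V$ null-homotopic in $O \cap U_i \subset O$. Thus $x$ is a point of local $k$-connectedness of $A$; equivalently, $A$ is covered by the open, locally $k$-connected sets $U_i$, and local $k$-connectedness passes from such a cover to the whole space.

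For $k$-connectedness of $A$ I would take a map $\varphi \colon S^k \to A$ with $k < n$; since $S^k$ is compact and $\{U_i\}$ is an increasing open cover of $A$, we get $\varphi(S^k) \subset U_m \subset A_{m+1}$ for some $m$. As $A_{m+1}$ is $k$-connected, $\varphi$ is null-homotopic in $A_{m+1}$, hence in $A$. Together with the previous paragraph, Theorem~\ref{thm:dugundji}(2) now gives $A \in AE(n)$.

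I do not expect a serious obstacle. The one step that uses the hypotheses essentially is the transfer of local $k$-connectedness from $A_{i+1}$ to $A$: it works because $U_i = \Int A_{i+1} \cap A$ is a genuine \emph{neighborhood} of $x$ in $A$ — precisely the content of $A_i \subset \Int A_{i+1}$ — so that a neighborhood of $x$ small enough to witness local $k$-connectedness inside $A_{i+1}$ is still open in $A$; the weaker information that each $A_i$ is merely closed in $A$ would not be enough. In particular no inductive patching of partial extensions is required once Theorem~\ref{thm:dugundji} is available. If one wished to bypass that theorem, one would instead have to extend a given map $f \colon B \to A$ (with $B$ closed in an $n$-dimensional metric space $Y$) over all of $Y$ by building the extension outward along the sets $f^{-1}(A_i)$, which is the more delicate route that the reduction to Dugundji's theorem lets us avoid.
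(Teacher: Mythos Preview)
Your argument is correct. The reduction to Dugundji's characterization (Theorem~\ref{thm:dugundji}) works exactly as you describe: the sets $U_i = \Int A_{i+1}$ (note $\Int A_{i+1} \subset A_{i+1} \subset A$, so the intersection with $A$ is redundant) form an increasing open cover of $A$, each $U_i$ is open in the locally $k$-connected space $A_{i+1}$ and hence itself locally $k$-connected, and local $k$-connectedness is a local property. The $k$-connectedness step via compactness of $S^k$ is equally clean. One incidental observation: your proof never invokes the hypothesis that the $A_i$ are closed, so you have in fact established a slightly stronger statement.

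As for comparison with the paper: there is nothing to compare, since the paper does not supply a proof of this lemma but merely cites \cite[Theorem~6.1.8]{Sakai_book}. Sakai's argument (and the classical one it follows) proceeds directly from the definition of $AE(n)$, extending a partial map $f \colon B \to A$ from a closed subset $B$ of an $n$-dimensional space $Y$ by an inductive construction over the preimages $f^{-1}(A_i)$; this is precisely the ``more delicate route'' you mention in your final paragraph, and it does use closedness of the $A_i$. Your approach via Theorem~\ref{thm:dugundji} is shorter and more transparent, at the cost of relying on Dugundji's theorem as a black box --- which is entirely appropriate here since the paper already imports that theorem.
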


\subsection{Polyhedra}

For a simplicial complex, the underlying polyhedron has two topologies, the Whitehead (weak) topology and the metric topology.
The metric topology is the topology of point-wise convergence of barycentric coordinates~\cite[p. 100]{hu1965}.
The weak topology is metrizable if and only if the complex is locally finite, and
it coincides with the metric topology in this case.
Since we work in the metric category with complexes that are not locally finite, {\bf we always assume the metric topology on simplicial complexes}~\cite{lundell1969,hu1965}.

\begin{definition}
  Let $K$ be a simplicial complex.
  We let $\tau(K)$ denote the \df{triangulation of~$K$} (the set of simplices of $K$).
  We let $V(K)$ denote the \df{vertex set of $K$}.
  We let $\beta K$ denote the \df{barycentric subdivision of $K$} (i.e., the same space but with a finer triangulation $\tau(\beta K)$).
\end{definition}

%\begin{definition}
%  The \df{metric topology} on a complex $K$ is a topology that satisfies the following axioms.
%  \begin{enumerate}
%  \item ...
%  \end{enumerate}
%\end{definition}

\begin{definition}\label{def:polyhedron metric}
Let $K$ be a simplicial complex.
Let $\kappa > 0$.
Let $\ell_1(V(K))$ denote the Banach space $\{ x \in \mathbb{R}^{V(K)} \colon \sum_{v \in V(K)} | (x)_v | < \infty \}$,
  where $(x)_v$ denotes the $v$-th coordinate, 
  equipped with the standard $\| \cdot \|_1$ norm.
For $v \in v(K)$ define $e_v \colon V(K) \to \mathbb{R}$ by the formula $e_v(w) = 0$ for $w \neq v$ and $e_v(v) = \kappa$.
We embed each vertex $v$ of $K$ as $e_v \in \ell_1(V(K))$ and extend this embedding to $K$ to be affine on each simplex of $K$. 
We consider $K$ to be a subspace of $\ell_1(V(K))$.
We call the induced metric on $K$ the \df{metric of scale $\kappa$ on $K$}.
For $\kappa=1$ it is the standard metric, as defined in~\cite[p. 100]{hu1965}.
The topology induced by this metric is called \df{the metric topology}.
\end{definition}

\begin{definition}
  A \df{polyhedron} is a simplicial complex endowed with the metric topology.
\end{definition}

\begin{definition} Let $K$ and $L$ be polyhedra. We say that $K$ is a \df{full subpolyhedron} of $L$ if whenever the vertices $v_0,\ldots, v_n$ span a simplex in $K$ and each $v_i$ is a vertex in $L$, then the $v_i$ span a simplex in $L$.
\end{definition}

\begin{lemma}\label{lem:complex is ane}
  A locally finite-dimensional polyhedron is a complete metric $ANE(\infty)$-space.
\end{lemma}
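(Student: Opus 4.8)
The plan is to establish the two assertions of the lemma separately: completeness, where the local finite-dimensionality hypothesis is genuinely used, and the extension property, which will follow from local contractibility together with Dugundji's theorem and (as for every metric polyhedron) does not really rely on the dimension hypothesis.

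\emph{Completeness.} View $K$ as a subset of the Banach space $\ell_1(V(K))$ as in Definition~\ref{def:polyhedron metric}; since the metric topology is precisely the subspace topology, it suffices to prove that $K$ is closed in $\ell_1(V(K))$. So let $y$ be a limit point, $y = \lim_n x_n$ with $x_n$ lying in a simplex $\sigma_n$ of $K$, and put $T = \{ v \in V(K) : (y)_v > 0\}$; note $T \neq \emptyset$ since $\|y\|_1 = \lim_n \|x_n\|_1 = \kappa > 0$. For each $v \in T$ we have $|(x_n)_v - (y)_v| \le \|x_n - y\|_1 \to 0$, hence $(x_n)_v > 0$, i.e. $v \in V(\sigma_n)$, for all large $n$; in particular $\sigma_n \subseteq \st(v)$ for all large $n$. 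Fixing one $v_0 \in T$, local finite-dimensionality gives $\dim \st(v_0) = d < \infty$, so $|V(\sigma_n)| \le d+1$ for large $n$, whence (since every element of $T$ eventually belongs to $V(\sigma_n)$) $|T| \le d+1$, and $T$ is finite. Now for all large $n$ we have $T \subseteq V(\sigma_n)$, so $T$ spans one fixed simplex $\sigma_T$ of $K$; writing $x_n = a_n + b_n$ with $a_n$ supported on $T$, we get $\|b_n\|_1 = \sum_{v \notin T}(x_n)_v \to 0$, hence $a_n \to y$ and $\|a_n\|_1 \to \kappa$, so the renormalizations $\tilde a_n = (\kappa / \|a_n\|_1)\, a_n \in \sigma_T$ still converge to $y$. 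As $\sigma_T$ is compact, $y \in \sigma_T \subseteq K$. Thus $K$ is closed in a complete space, so $K$ is complete.

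\emph{Extension property.} By Theorem~\ref{thm:dugundji}(1) it suffices to show that $K$ is locally $k$-connected for every $k$, and for this I would exhibit a neighborhood basis of contractible open sets at each point. Fix $x \in K$, let $\sigma$ be the carrier of $x$ (the unique simplex whose relative interior contains $x$), and set $c = \min_{v \in V(\sigma)} (x)_v > 0$. For $0 < \epsilon < c$ and any $y \in B(x,\epsilon)$ we have $(y)_v \ge (x)_v - \|y - x\|_1 > 0$ for every $v \in V(\sigma)$, so $V(\sigma)$ is contained in the vertex set of the carrier of $y$; hence $\sigma$ is a face of that carrier, and in particular $x$ and $y$ lie in a common simplex of $K$. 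Therefore the straight-line homotopy $H(y,t) = (1-t)y + tx$ takes values in $K$ (each simplex is convex, and $K$ carries the subspace topology from $\ell_1(V(K))$, so $H$ is automatically continuous) and satisfies $\|H(y,t) - x\|_1 \le \|y - x\|_1 < \epsilon$; thus it contracts $B(x,\epsilon)$ to $x$ inside $B(x,\epsilon)$. Consequently the balls $B(x,\epsilon)$ with $\epsilon < c$ form a neighborhood basis at $x$ consisting of contractible sets, so $K$ is locally $k$-connected for all $k$, and $K \in ANE(n)$ for every $n$ by Dugundji.

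The only delicate point is the finiteness of $T$ in the completeness argument, and this is exactly where local finite-dimensionality cannot be dropped: the metric polyhedron of the full complex on a countably infinite vertex set is still locally contractible but is dense and not closed in $\ell_1$, hence not complete. (If one prefers to read $ANE(\infty)$ as ``absolute neighborhood extensor for every metrizable space'' rather than as $\bigcap_n ANE(n)$, the local-connectedness argument above would need to be combined with the classical fact that a finite-dimensional metric polyhedron is an ANR and a local-to-global patching in the spirit of Lemma~\ref{lem:union is ae}, realizing a locally finite-dimensional $K$ as an expanding union of finite-dimensional subpolyhedra; that patching would be the main additional work.)
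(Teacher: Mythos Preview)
Your argument is correct and self-contained; by contrast, the paper's proof of this lemma consists entirely of two citations to Hu's \emph{Theory of Retracts} (Lemma~11.5 for completeness, Theorem~11.3 for the extension property). Your completeness argument via closedness in $\ell_1(V(K))$ is essentially the standard one and correctly isolates where local finite-dimensionality enters. For the extension property your route---small metric balls are star-shaped about their center because the carrier of the center is a face of the carrier of every nearby point, hence those balls are contractible, hence $K$ is locally $k$-connected for every $k$, hence $K \in ANE(n)$ for every $n$ by Theorem~\ref{thm:dugundji}---is more elementary than invoking ANR machinery, and it is sufficient for every use of the lemma in this paper (each application is at a fixed finite $n$). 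The caveat you raise about the stronger reading of $ANE(\infty)$ is apt but inessential here. One minor redundancy: once you know $T$ is finite, that $T$ spans a simplex $\sigma_T$, and that $y$ has nonnegative coordinates supported on $T$ with $\|y\|_1 = \kappa$, you already have $y \in \sigma_T$; the renormalized approximants $\tilde a_n$ are not needed.
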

\begin{proof}
  It is complete by~\cite[Lemma~11.5]{hu1965}. It is an $ANE(\infty)$-space by~\cite[Theorem 11.3]{hu1965}.
\end{proof}

%Below we provide two constructions of metrics on simplicial complexes that induce the metric topology. We will use both constructions in our proofs.

%\begin{numbered_definition}\label{def:geodesic polyhedron}
%  Let $K$ be a simplicial complex. Let $\kappa \in (0, \infty)$.
%  We define \df{a geodesic metric of scale $\kappa$ on $K$} in the following way.
%  On each $\delta \in \tau(K)$ we take a Euclidean metric with edge length $\kappa$. We extend this on $K$ to the unique geodesic metric.
%\end{numbered_definition}

%\begin{lemma}
%  The metric defined in Definition~\ref{def:polyhedron metric} induces the metric topology.
%  The metric defined in Definition~\ref{def:geodesic polyhedron} induces the metric topology.
%\end{lemma}
%\begin{proof}
%\end{proof}

\begin{definition}
  Let $K$ and $L$ be simplicial complexes and let $p \colon K \to L$.
  We say that $p$ is \df{quasi-simplicial} if it is a simplicial map into $\beta L$.
\end{definition}

\begin{lemma}[cf.~\cite{bellnagorko2013}]\label{lem:qs is lipschitz}
  Assume that $K$ and $L$ are polyhedra endowed with metrics of scale $\kappa$ and $\lambda$ respectively.
  If $p \colon K \to L$ is quasi-simplicial, then it is $\frac{\lambda}{2\kappa}$-Lipschitz.
\end{lemma}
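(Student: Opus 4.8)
The plan is to compute $p(x)-p(y)$ directly inside the Banach space $\ell_1(V(L))$. For $x\in K$ write $\widehat{x}$ for its vector of barycentric coordinates, so that under the scale-$\kappa$ embedding $\|x-y\|_1=\kappa\|\widehat{x}-\widehat{y}\|_1$. Since $p$ is quasi-simplicial it sends each vertex $v$ of $K$ to a vertex of $\beta L$, that is, to the barycentre $b_{\tau(v)}$ of a simplex $\tau(v)$ of $L$; under the scale-$\lambda$ embedding this point is the vector $\frac{\lambda}{|\tau(v)|}\sum_{w\in\tau(v)}e_w$, which has $\ell_1$-norm exactly $\lambda$ and lies in the positive cone. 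As $p$ is affine on each simplex of $K$, $p(x)=\sum_v\widehat{x}(v)\,b_{\tau(v)}$ (a finite sum), so for all $x,y\in K$
\[
  p(x)-p(y)=\sum_{v\in V(K)}g(v)\,b_{\tau(v)},\qquad g:=\widehat{x}-\widehat{y},\ \ \textstyle\sum_v g(v)=0 .
\]
Hence it suffices to show $\big\|\sum_v g(v)\,b_{\tau(v)}\big\|_1\le\tfrac{\lambda}{2}\|g\|_1$ for every finitely supported zero-sum $g$, using the structural fact that, when $g$ is supported on the vertices of one simplex of $K$, the simplices $\tau(v)$ occurring in the sum are totally ordered by inclusion (this is exactly what ``simplicial into $\beta L$'' buys).

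I would first treat the single-simplex case. Given a chain $\tau_0\subseteq\tau_1\subseteq\cdots$, split $g=g^{+}-g^{-}$ into positive and negative parts, each of $\ell_1$-mass $A:=\tfrac12\|g\|_1$, and rewrite $\sum_v g(v)\,b_{\tau(v)}=A(P-Q)$, where $P,Q$ are convex combinations of the $b_{\tau(v)}$ and so are two points of a single simplex of $\beta L$. The inequality reduces to the claim $\|P-Q\|_1\le\lambda$, and to prove it I would exploit the averaging nature of the subdivision: every vertex $b_{\tau_i}$ of that simplex puts positive weight on the common coordinate block $V(\tau_0)$, so $P$ and $Q$ have overlapping supports, and a careful accounting of this overlap against the normalisations $\tfrac{\lambda}{|\tau_i|}$ should pin down how spread out a simplex of $\beta L$ can be in the $\ell_1$ metric of $L$. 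Having this, I would drop the single-simplex hypothesis by a telescoping (Abel-summation) argument along the two inclusion-chains $\tau(v)$ determined by the carriers of $\widehat{x}$ and $\widehat{y}$; everything then reduces to the displayed linear identity together with convexity.

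The main obstacle I anticipate is the \emph{sharp} constant $\tfrac{\lambda}{2\kappa}$. Applying the triangle inequality termwise to the displayed sum gives at once only the weaker bound $\|p(x)-p(y)\|_1\le\lambda\|g\|_1=\tfrac{\lambda}{\kappa}\|x-y\|_1$, because each $b_{\tau(v)}$ has $\ell_1$-norm $\lambda$; producing the extra factor $\tfrac12$ forces a simultaneous use of the cancellation $\sum_v g(v)=0$ and of a quantitative description of how barycentric subdivision is embedded in a polyhedron for the $\ell_1$ metric. Once that estimate is in hand, the reduction to a single simplex and the final Lipschitz conclusion should be routine.
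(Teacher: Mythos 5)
The paper itself gives no proof of Lemma~\ref{lem:qs is lipschitz} (it is quoted from \cite{bellnagorko2013}), so your computation is the only thing to judge, and the step you defer is precisely the step that fails. You reduce the whole lemma to the claim that two points $P,Q$ of a single simplex of $\beta L$ satisfy $\|P-Q\|_1\le\lambda$ in the scale-$\lambda$ embedding of $L$. That claim is false. For a chain $\sigma\subseteq\tau$ with $s=|\sigma|$ and $t=|\tau|$ vertices, the barycenters satisfy $\|b_\sigma-b_\tau\|_1=2\lambda\,(1-s/t)$: already for $\sigma=\{w_0\}\subset\tau=\{w_0,w_1,w_2\}$ this equals $\tfrac{4}{3}\lambda>\lambda$, and it approaches $2\lambda$ as $t/s\to\infty$. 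So the $\ell_1$-diameter of a simplex of $\beta L$, measured in the metric of $L$, is not bounded by $\lambda$, and no ``careful accounting of the overlap'' can produce the factor $\tfrac12$: the overlap of the supports of $P$ and $Q$ is exactly what the formula $2\lambda(1-s/t)$ quantifies, and it degenerates as $t$ grows.

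Moreover, the obstacle you flagged is not a defect of your method: with the metric exactly as in Definition~\ref{def:polyhedron metric}, the constant $\tfrac{\lambda}{2\kappa}$ cannot be proved once $\dim L\ge 2$, because the quasi-simplicial map sending the two vertices $u,v$ of an edge (at distance $2\kappa$) to $b_{\{w_0\}}$ and $b_{\{w_0,w_1,w_2\}}$ has $\|p(u)-p(v)\|_1=\tfrac43\lambda>\tfrac{\lambda}{2\kappa}\cdot 2\kappa$. What your zero-sum/chain argument does yield is the bound $\tfrac{\lambda}{\kappa}$ (refined on each simplex to $\tfrac{\lambda}{\kappa}\bigl(1-|\sigma_{\min}|/|\sigma_{\max}|\bigr)$), and under this normalization that is essentially sharp; the missing factor $2$ has to come from the normalization of the scale metric used in \cite{bellnagorko2013}, not from a cleverer estimate. (For the paper's application the discrepancy is harmless: in Theorem~\ref{thm:ane limit of polyhedra} one only needs the bonding maps to be $1$-Lipschitz after a suitable choice of scales, and replacing $2^{-i}$ by $4^{-i}$ restores this while keeping the meshes summable.) As written, though, your plan cannot be completed: the gap is the unproved, and in fact untrue, diameter estimate for simplices of $\beta L$.
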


\subsection{Weak $n$-homotopy}

\begin{definition}
  We say that a map is a \df{weak $n$-homotopy equivalence} if it induces isomorphisms on homotopy groups of dimensions less than $n$, regardless of the choice of basepoint.
\end{definition}

%\begin{definition}
%  We say that the two maps $f, g \colon X \to Y$ are \df{$n$-homotopic} if for every map $\varPhi$ from a complex of dimension less than $n$ into $X$, the compositions $f \circ \varPhi$ and $g \circ \varPhi$ are homotopic in the usual sense.
%\end{definition}

%\begin{definition}
%  We say that a map $f \colon X \to Y$ is an \df{$n$-homotopy equivalence} if there exists a map $g \colon Y \to X$ such that $g \circ f$ is $n$-homotopic to $id_X$ and $f\circ g$ is $n$-homotopic to $id_Y$.
%\end{definition}

%\begin{theorem}[{\cite[p.~17]{nagorkophd}}]
%  A map of two at most $n$-dimensional $ANE(n)$-spaces is a weak $n$-homotopy equivalence if and only if it is an $n$-homotopy equivalence.
%\end{theorem}

\begin{definition}
  Let $\F{}$ be a cover of a space $X$. We say that the two maps $f, g \colon Y \to X$ are \df{$\F{}$-close} if for each $y \in Y$ there exists $F \in \F{}$ such that $f(y), g(y) \in F$.
\end{definition}

\begin{definition} Let $\U{}$ be a cover of a space $X$. We say that two maps $f, g \colon Y \to X$ are \df{$\U{}$-homotopic} if there exists a homotopy $H \colon Y \times [0, 1] \to X$ whose paths refine $\U{}$, i.e. for each $y \in Y$ there exists $U \in \U{}$ such that $H(\{ y \} \times [0, 1])\subset U$.
\end{definition}

%\begin{definition}
%  We say that a map $f \colon X \to Y$ is a \df{$\U{}$-$n$-homotopy} if there exists a map $g \colon Y \to X$ such that $g \circ f$ is $\U{}$-homotopic to $id_X$ and $f\circ g$ is $\U{}$-homotopic to $id_Y$.
%\end{definition}

\subsection{Carrier Theorem}\label{sec:carrier theorem}

\begin{definition}
  Let~$\mathcal{C}$ be a class of topological spaces. 
  We say that a cover $\F{}$ of a topological space is a $\mathcal{C}$-cover, if for each $\mathcal{A} \subset \mathcal{F}$ the intersection $\bigcap \mathcal{A}$ is either empty or belongs to $\mathcal{C}$.
\end{definition}

\begin{definition}
  We say that a cover is \df{locally finite-dimensional} if its nerve is locally finite-dimensional.
\end{definition}
\begin{definition}
  A \df{carrier} is a function $C \colon \F{} \to \G{}$ from
  a cover~$\F{}$ of a space~$X$ into a collection~$\G{}$ of subsets of
  a topological space such that for each $\mathcal{A} \subset \F{}$ if
  $\bigcap_{A\in\mathcal{A}}A \neq \emptyset$, then $\bigcap_{A \in
  \mathcal{A}} C(A) \neq \emptyset$. We say that a map~$f$ is
  \df{carried by~$C$} if it is defined on a
  closed subset of~$X$ and $f(F) \subset C(F)$ for each $F \in \F{}$. Here we write $f(F)$ to mean $f(F\cap \dom(f)).$
\end{definition}

\begin{carrier theorem}[\cite{nagorko2007}]
  Assume that $C \colon \F{} \to \G{}$ is a carrier such that~$\F{}$ is a
  cover of a space~$X$ and~$\G{}$ is an $AE(X)$-cover of another space.
  If~$\F{}$ is closed, locally finite, and locally finite-dimensional, then each map
  carried by~$C$ extends to a map of the entire space~$X$, also carried
  by~$C$.
\end{carrier theorem}

\begin{corollary}[\cite{nagorko2007}]
  If $\F{}$ is a closed locally finite locally finite-dimensional $AE(n)$-cover of a space $Y$, 
  then any two $\F{}$-close maps from a metric space of dimension less than $n$ into $Y$ are $\F{}$-homotopic.
\end{corollary}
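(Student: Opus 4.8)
The plan is to deduce this from the Carrier Theorem by constructing an appropriate carrier built from the cover $\F{}$ and the two given maps. Suppose $f, g \colon Z \to Y$ are $\F{}$-close, where $Z$ is a metric space with $\dim Z < n$. First I would replace $Z$ by the cylinder $Z \times [0,1]$, whose dimension is still less than $n$ (it is at most $\dim Z + 1 \le n$; in fact one should be slightly careful here, so the cleaner route is to work with the subset $Z \times \{0,1\}$, which is closed in $Z \times [0,1]$ and has dimension $< n$, and to extend over the whole cylinder). On the closed subset $A = Z \times \{0,1\}$ define $h \colon A \to Y$ by $h(z,0) = f(z)$ and $h(z,1) = g(z)$. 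The goal is to extend $h$ to a homotopy $H \colon Z \times [0,1] \to Y$ whose paths refine $\F{}$; this is exactly what produces the $\F{}$-homotopy.

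The key step is to define the carrier. For each $z \in Z$, since $f$ and $g$ are $\F{}$-close there is some $F_z \in \F{}$ containing both $f(z)$ and $g(z)$. Using that $\F{}$ is a closed locally finite cover of $Y$, pull these back: for each $F \in \F{}$ let $W_F = \{ z \in Z : f(z) \in F \text{ and } g(z) \in F \}$ — or, more robustly, take an open cover of $Z$ refining the pullbacks $f^{-1}(F) \cap g^{-1}(F)$ and index it suitably — and set $C$ on the corresponding cover of the cylinder $Z \times [0,1]$ by sending each member to the element $F \in \F{}$ it came from. One checks that the nerve condition required of a carrier holds because it holds for $\F{}$ itself (the pullback cover's nerve maps into the nerve of $\F{}$). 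The map $h$ on $A$ is carried by $C$: on the piece over $W_F$, both $f$ and $g$ land in $F = C(\cdot)$. The cover of $Z \times [0,1]$ so obtained is closed, locally finite, and locally finite-dimensional (inherited from the corresponding properties of $\F{}$ via the nerve map), and $\F{}$ is an $AE(n)$-cover while $Z \times [0,1]$ has dimension at most $n$ — so $\G{} = \F{}$ is an $AE(Z\times[0,1])$-cover in the sense required, and the Carrier Theorem applies to extend $h$ over all of $Z \times [0,1]$, with the extension still carried by $C$. Being carried by $C$ means precisely that each path $H(\{z\} \times [0,1])$ lies in a single member of $\F{}$, i.e. $H$ is an $\F{}$-homotopy from $f$ to $g$.

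The main obstacle is bookkeeping the dimension and the indexing: one must ensure the auxiliary cover of the cylinder genuinely inherits ``closed, locally finite, locally finite-dimensional'' from $\F{}$ and that the resulting function really satisfies the intersection (nerve) condition for a carrier, rather than merely being a set-valued assignment. The cleanest way to handle this is to take the cover of $Z$ to be $\{f^{-1}(F) \cap g^{-1}(F)\}_{F \in \F{}}$ directly (these are closed since $f, g$ are continuous and members of $\F{}$ are closed; local finiteness and local finite-dimensionality follow because a subfamily has nonempty intersection in $Z$ only if the corresponding subfamily of $\F{}$ has nonempty intersection in $Y$), cross with $[0,1]$, and take $C$ to be the obvious projection to $\F{}$; then every hypothesis of the Carrier Theorem is verified directly. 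Continuity of the extension is automatic from the theorem, and restricting to $Z \times \{0\}$ and $Z \times \{1\}$ recovers $f$ and $g$, completing the argument.
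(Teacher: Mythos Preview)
Your approach is correct and is exactly how the paper argues in the closely related Lemma~\ref{lem:uclose uhomotopic} (the corollary itself is not proved in the paper but cited from~\cite{nagorko2007}); in the closed, locally finite, locally finite-dimensional case the extra refinement step used there is unnecessary, just as you observe. One minor correction: the nerve-inclusion reason you give (``a subfamily has nonempty intersection in $Z$ only if the corresponding subfamily of $\F{}$ does in $Y$'') establishes local finite-dimensionality of $\{f^{-1}(F)\cap g^{-1}(F)\}_{F\in\F{}}$ but not local finiteness---for the latter, note instead that each member is contained in the corresponding member of the locally finite pullback $f^{-1}(\F{})$ (cf.\ Lemma~\ref{lem:pullback properties}).
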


%\begin{theorem}
%  If $C \colon \F{} \to \G{}$ is a carrier such that~$\F{}$ is an open cover of a metric space~$X$ and~$\G{}$ is an $AE(X)$-cover of another space, then each map carried by~$C$ extends to a map of the entire space~$X$, which is also carried by~$C$.
%\end{theorem}

\subsection{Covers}
We regard covers as indexed collections of sets
and use the usual notation, $\F{} = \{ F_i \}_{i \in I}$,
  where $I$ denotes the indexing set.

\begin{numbered_definition}\label{def:stars}
  Let $K$ be a polyhedron. Let $L \subset K$ be a subcomplex of $K$.
  The \df{open star $\ost_K L$ of $L$ in $K$} is the complement of the union of all simplices of $K$ that do not intersect $L$:
  \[
    \ost_K L = K \setminus \bigcup \{ \delta \in \tau(K) \colon \delta \cap L = \emptyset \}\text{.}
  \]
  The \df{barycentric star $\bst_K L$ of $L$ in $K$} is the union of all simplices of $\beta K$ that intersect $L$:
  \[
    \bst_K L = \bigcup \{ \delta \in \tau(\beta K) \colon \delta \cap L \neq \emptyset \}\text{.}
  \]

  We let
  \[
    \O{K} = \{ \ost_K \{ v \} \colon v \in V(K) \}
  \]
  denote the \df{cover of $K$ by open stars of vertices}.
  We let
  \[
    \B{K} = \{ \bst_K \{ v \} \colon v \in V(K) \}
  \]
  denote a \df{cover of $K$ by barycentric stars of vertices}.
\end{numbered_definition}

\begin{lemma}[\cite{nagorko2013}]\label{lem:stars}
  Let $K$ be a polyhedron.
  The cover $\B{K}$ by barycentric stars of vertices
    is a closed locally finite $AE(\infty)$-cover of $K$. 
  Moreover, if~$K$ is locally finite-dimensional, then $\B{K}$ is locally finite-dimensional.
  The cover~$\O{K}$ by open stars of vertices is an open
    $AE(\infty)$-cover of $K$.
\end{lemma}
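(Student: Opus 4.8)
The plan is to reduce every assertion to two explicit coordinate descriptions of the stars. Writing $(y)_w$ for the barycentric coordinate of $y \in K$ at the vertex $w$, I would first establish that
\[
  \ost_K\{v\} = \{ y \in K \colon (y)_v > 0 \}, \qquad \bst_K\{v\} = \{ y \in K \colon (y)_v \geq (y)_w \text{ for all } w \in V(K) \}.
\]
The first is immediate from the definition. For the second I would trace the flag description of $\beta K$: a point $y$ lies in some simplex of $\beta K$ containing $v$ exactly when the simplex of $\beta K$ carrying $y$ can be enlarged by $\{v\}$, and since the simplices of $\beta K$ correspond to flags of simplices of $K$ while $\{v\}$ is a vertex, this is possible iff $v$ is a vertex of the smallest simplex in that flag, which is precisely the condition that $(y)_v$ be a maximal barycentric coordinate of $y$. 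Granting these formulas, the topological claims are immediate: $\ost_K\{v\}$ is open because its complement is the union of the closed simplices of $K$ not containing $v$, a subcomplex of $K$; and $\bst_K\{v\}$ is closed because $y \mapsto \sup_w (y)_w$ is continuous (a supremum of uniformly $1$-Lipschitz coordinate functions, attained on the finite carrier of $y$), so $\bst_K\{v\}$ is the preimage of $[0,\infty)$ under $y \mapsto (y)_v - \sup_w (y)_w$.

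Next I would treat local finiteness and the nerve. For local finiteness of $\B{K}$, fix $y_0$ with carrier $\tau_0$ and maximal coordinate $M_0 = \sup_w (y_0)_w > 0$: if $v \notin V(\tau_0)$ then $(y_0)_v = 0$, so on the ball $B(y_0,r)$ with $r < M_0/2$ one has $(y)_v < r < M_0 - r \leq \sup_w (y)_w$, whence $y \notin \bst_K\{v\}$; thus $B(y_0,r)$ meets $\bst_K\{v\}$ only for the finitely many $v \in V(\tau_0)$. The same descriptions identify both nerves: $\bigcap_i \bst_K\{v_i\} \neq \emptyset$ (respectively $\bigcap_i \ost_K\{v_i\} \neq \emptyset$) iff the coordinates at $v_0,\dots,v_p$ can be simultaneously maximal (respectively simultaneously positive) at some point, and in each case this holds iff $\{v_0,\dots,v_p\}$ spans a simplex of $K$. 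Hence both nerves are isomorphic to $K$ as abstract complexes, and in particular $\B{K}$ is locally finite-dimensional whenever $K$ is.

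The heart of the proof is the $AE(\infty)$ property, and this is where I expect the main difficulty, since $K$ is \emph{not} assumed locally finite-dimensional and Lemma~\ref{lem:complex is ane} is therefore unavailable. By the computation above the nonempty intersections are the open star $\ost_K\sigma = \{y \colon (y)_{v_i} > 0 \ \forall i\}$ of the spanned simplex $\sigma$ (for $\O{K}$) and the central region $I_\sigma = \{y \colon (y)_{v_i} = \sup_w (y)_w \ \forall i\}$ (for $\B{K}$). I would show each is $AE(\infty)$ via Dugundji's characterization (Theorem~\ref{thm:dugundji}) by producing both global and local contractibility through affine homotopies kept inside $K$ by confining them to carrier simplices. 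Globally, $H(y,t) = (1-t)y + t\,\hat\sigma$ toward the barycenter $\hat\sigma$ of $\sigma$ stays in the carrier of $y$ (which contains $\sigma$, hence $\hat\sigma$), and the coordinate inequalities show it preserves membership in $\ost_K\sigma$, respectively $I_\sigma$; this gives $k$-connectedness for all $k$. Locally, for $x_0$ in the set with carrier $\tau_0$ and $\delta < \min_{u \in V(\tau_0)} (x_0)_u$, the homotopy $G(y,t) = (1-t)y + t\,x_0$ contracts $B(x_0,\delta) \cap \ost_K\sigma$ (respectively $B(x_0,\delta) \cap I_\sigma$) to $x_0$ inside itself: the choice of $\delta$ forces the carrier of each such $y$ to contain $\tau_0 \ni x_0$, so $[y,x_0] \subset K$; the same inequalities keep $G$ in the star; and $\|G(y,t)-x_0\|_1 = (1-t)\|y-x_0\|_1 < \delta$ keeps it in the ball. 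These balls form a neighborhood basis of contractible sets, giving local $k$-connectedness for all $k$, and Dugundji's theorem then yields $AE(\infty)$. The one delicate point throughout is precisely this containment of the affine homotopies in $K$ and in the star, which the carrier bookkeeping and the uniform-gap choice of $\delta$ are designed to guarantee without any local finiteness of $K$.
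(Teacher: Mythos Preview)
Your argument is correct. The paper does not supply a proof of this lemma at all; it simply cites \cite{nagorko2013} and moves on. So there is nothing to compare against in the strict sense, and what you have written is a genuine, self-contained proof where the paper has none.

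A few remarks on how your approach sits relative to the paper. Your coordinate formula for $\bst_K\{v\}$ is exactly the description the authors themselves use later (in the proof of Lemma~\ref{lem:deformation}), so you are on the same wavelength. In fact, Lemma~\ref{lem:deformation} gives an alternative route to the \emph{global} contractibility of the nonempty intersections: since a simplex $\sigma$ is a full subcomplex, that lemma shows $\sigma$ is a deformation retract of $\ost_K\sigma$ and of $\bst_K\sigma$, hence both are contractible. Your straight-line homotopy to $\hat\sigma$ achieves the same thing more directly. The point the paper's internal machinery does \emph{not} cover is local $k$-connectedness of these intersections in the absence of local finite-dimensionality, and your carrier-containment argument with $\delta < \min_{u\in V(\tau_0)}(x_0)_u$ handles this cleanly: it is precisely the ingredient one needs to invoke Dugundji (Theorem~\ref{thm:dugundji}) and conclude $AE(\infty)$ without appealing to Lemma~\ref{lem:complex is ane}.

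The nerve identification and local finiteness arguments are routine and correct as written; the only thing worth saying explicitly is that you are using the $\ell_1$ metric of Definition~\ref{def:polyhedron metric}, so that $|(y)_v - (y_0)_v| \leq \|y-y_0\|_1$, which justifies the inequalities in the local finiteness step.
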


\begin{definition}
  Let
  \[
  X = \lim_{\longleftarrow} \left(   
  K_1\xleftarrow{p_1}K_2\xleftarrow{p_2}\cdots \right)
  \]
  Let $v(K_i)$ denote the set of vertices of $K_i$.
  We let
  \[
    \O{K_i} = \{ O_v = \ost_{K_i} v \}_{v \in v(K_i)}
  \]
  be the cover of $K_i$ by open stars of vertices of $K_i$ (see Definition~\ref{def:stars}) and
  \[
    \O{i} = \{ \pi^{-1}_i(\ost_{K_i} v) \}_{v \in v(K_i)}
  \]
  be the cover of $X$ by sets of threads that pass through elements of $\O{K_i}$.
\end{definition}

\begin{definition}
  Let
  \[
  X = \lim_{\longleftarrow} \left(   
  K_1\xleftarrow{p_1}K_2\xleftarrow{p_2}\cdots \right)
  \]
  Let $v(K_i)$ denote the set of vertices of $K_i$.
  We let
  \[
    \B{K_i} = \{ \bst_{K_i} v \}_{v \in v(K_i)}
  \]
  be the cover of $K_i$ by barycentric stars of vertices of $K_i$ (see Definition~\ref{def:stars}) and
  \[
    \B{i} = \{ \pi^{-1}_i(\bst_{K_i} v) \}_{v \in v(K_i)}
  \]
  be the cover of $X$ by sets of threads that pass through elements of $\B{K_i}$.
\end{definition}

\begin{definition}
  We say that a cover $\F{} = \{ F_i \}_{i \in I}$ is \df{isomorphic} to a
  cover $\G{} = \{ G_i \}_{i \in I}$ if for each $J \subset I$ we have
  \[
    \bigcap_{j \in J} F_j \neq \emptyset \iff 
    \bigcap_{j \in J} G_j \neq \emptyset.
  \]
  Note the identical indexing set of $\F{}$ and $\G{}$.
\end{definition}
\begin{definition}
  Let $p \colon Y \to Z$ be a map.
  Let $\F{} = \{ F_i \}_{i \in I}$ be a cover of $Z$.
  A \df{pull-back of $\F{}$} is a cover $p^{-1}(\F{})$ of $Y$ defined by the formula
  \[
  p^{-1}(\F{}) = \{ p^{-1}(F_i) \}_{i \in I}.
  \]
  The pull-back $p^{-1}(\F{})$ retains the indexing set $I$ of $\F{}$.
\end{definition}

\begin{lemma}\label{lem:pullback properties}
  Let $f \colon X \to Y$ be a map, let $\G{}$ be a cover of $Y$ and let $f^{-1}(\G{})$ be a pull-back of $\G{}$.
  \begin{enumerate}
      \item If $f$ is surjective, then the covers $f^{-1}(\G{})$ and $\G{}$ are isomorphic.
      \item If $\G{}$ is open / closed / locally finite / locally finite-dimensional, then so is $f^{-1}(\G{})$.
  \end{enumerate}
\end{lemma}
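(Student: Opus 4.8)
The plan is to reduce everything to the elementary set‑theoretic identity for preimages: for any function $f \colon X \to Y$ and any indexed family $\{G_j\}_{j \in J}$ of subsets of $Y$ one has $\bigcap_{j \in J} f^{-1}(G_j) = f^{-1}\big(\bigcap_{j \in J} G_j\big)$, valid for arbitrary (not merely finite) $J$. This is the point to keep in mind, since both ``isomorphic'' and ``locally finite-dimensional'' are phrased via nonemptiness of intersections of subfamilies indexed by subsets of the common index set $I$ of $\G{}$ and $f^{-1}(\G{})$.

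For part (1) I would fix $J \subseteq I$, rewrite $\bigcap_{j \in J} f^{-1}(G_j) = f^{-1}\big(\bigcap_{j \in J} G_j\big)$, and use that for a \emph{surjection} a set $A \subseteq Y$ is empty if and only if $f^{-1}(A)$ is empty. Taking $A = \bigcap_{j \in J} G_j$ yields the equivalence $\bigcap_{j \in J} f^{-1}(G_j) \neq \emptyset \iff \bigcap_{j \in J} G_j \neq \emptyset$, which (as $J$ ranges over all subsets of $I$ and the indexing sets coincide) is exactly the definition of $f^{-1}(\G{})$ and $\G{}$ being isomorphic.

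For part (2), openness and closedness are immediate from continuity of $f$. For local finiteness I would take $x \in X$, choose a neighborhood $V$ of $f(x)$ meeting only finitely many members of $\G{}$, note that $f^{-1}(V)$ is a neighborhood of $x$ (pass through an open set between $f(x)$ and $V$), and observe that $f^{-1}(V) \cap f^{-1}(G_i) = f^{-1}(V \cap G_i)$ is empty whenever $V \cap G_i = \emptyset$; hence $f^{-1}(V)$ meets only finitely many $f^{-1}(G_i)$. For local finite-dimensionality I would argue that the preimage identity forces every simplex of the nerve of $f^{-1}(\G{})$ to be a simplex of the nerve of $\G{}$ (a nonempty finite intersection of the sets $f^{-1}(G_{i_m})$ forces $\bigcap_m G_{i_m} \neq \emptyset$), so the nerve of $f^{-1}(\G{})$ is a subcomplex of the nerve of $\G{}$; and a subcomplex of a locally finite-dimensional complex is locally finite-dimensional, since the simplices through any given vertex form a subset of those in the larger complex.

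I do not expect a genuine obstacle here; the only things worth flagging are that surjectivity is used \emph{only} in part (1) — in part (2) one needs merely the trivial implication ``$f^{-1}(A) \neq \emptyset \Rightarrow A \neq \emptyset$'' — and that the preimage identity must be invoked for arbitrary, not just finite, intersections when handling the ``isomorphic'' clause.
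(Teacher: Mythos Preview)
Your argument is correct. The paper does not supply a proof for this lemma at all --- it is stated without proof, evidently regarded as an elementary verification --- so there is nothing to compare against; your write-up gives exactly the natural direct argument via the preimage-of-intersections identity and the observation that the nerve of $f^{-1}(\G{})$ embeds as a subcomplex of the nerve of $\G{}$.
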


\section{Nerve Theorem}

A Nerve Theorem in its abstract form states that if two spaces admit isomorphic $AE(n)$-covers, then they are weak $n$-homotopy equivalent.
For general spaces, some local finiteness and dimension restrictions are placed on the covers~\cite{nagorko2007}.
In our case we need such a theorem without these restrictions, which we are able to prove for polyhedra covered by subcomplexes.

The goal of this section is to prove the following theorem.

\begin{theorem}\label{thm:nregular pullback}
  If a quasi-simplicial map $p \colon K \to L$ of polyhedra is surjective and $n$-regular, then it is a weak $n$-homotopy equivalence and the pull-back $p^{-1}(\B{L})$ is an $AE(n)$-cover of $K$.
\end{theorem}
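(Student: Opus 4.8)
The plan is to separate the combinatorial content of the statement from its homotopy-theoretic content, and to exploit the fact that every extension or homotopy we need to build has a \emph{finite-dimensional} domain (a sphere, a disk, or one of these times $[0,1]$); on such domains the Carrier Theorem and its Corollary apply with no local-finiteness restriction, since the only cover that could misbehave is the one on the domain, where it is finite. \emph{Step 1 (the pull-back cover combinatorially).} Since $p$ is simplicial into $\beta L$, the preimage $p^{-1}(M)$ is a subcomplex of $K$ for every subcomplex $M$ of $\beta L$. A direct computation with the flags of simplices of $L$ that index the simplices of $\beta L$ shows that $\bigcap_{v\in J}\bst_L v=\emptyset$ unless $J$ spans a simplex $\sigma$ of $L$, in which case it equals the subcomplex $S_\sigma:=\bigcap_{v\in\sigma}\bst_L v$ of $\beta L$; moreover $S_\sigma$ is the closed star of the barycenter $\hat\sigma$ inside $S_\sigma$, hence a cone and in particular contractible (equivalently, $S_\sigma$ is $AE(\infty)$ by Lemma~\ref{lem:stars}, being a nonempty intersection of members of the $AE(\infty)$-cover $\B{L}$). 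Since $p$ is surjective, $p^{-1}(\B{L})$ and $\B{L}$ are isomorphic covers (Lemma~\ref{lem:pullback properties}), so the nonempty finite intersections of $p^{-1}(\B{L})$ are exactly the subcomplexes $p^{-1}(S_\sigma)=\bigcap_{v\in\sigma}p^{-1}(\bst_L v)$ as $\sigma$ ranges over $\tau(L)$. Hence the assertion ``$p^{-1}(\B{L})$ is an $AE(n)$-cover'' reduces to: each $p^{-1}(S_\sigma)$ is an $AE(n)$-space.

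\emph{Step 2 ($n$-regularity transports connectivity).} Let $q\colon M\to P$ be quasi-simplicial, surjective and $n$-regular, and let $P'$ be any subcomplex of $\beta P$. I first observe that $\mathcal{G}:=\{q^{-1}(\delta):\delta\in\tau(P')\}$ is a closed $AE(n)$-cover of $q^{-1}(P')$: a nonempty finite intersection of its members is $q^{-1}(\rho)$ for a simplex $\rho$ of $P'$, which is $(n-1)$-connected by $n$-regularity, and, being a polyhedron, is $ANE(n)$, hence $AE(n)$ by Dugundji's Theorem~\ref{thm:dugundji}. Next, suppose $P'$ is $j$-connected for some $j<n$; I claim $q^{-1}(P')$ is $j$-connected. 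Given $f\colon S^j\to q^{-1}(P')$, the composite $qf$ extends to $G\colon D^{j+1}\to P'$ by $j$-connectedness of $P'$. The cover $\mathcal{F}:=\{G^{-1}(\delta):\delta\in\tau(P')\}$ of the disk $D^{j+1}$ is closed, and since $G(D^{j+1})$ is compact it meets only finitely many simplices of $P'$, so $\mathcal{F}$ is finite, hence locally finite and locally finite-dimensional. The rule $G^{-1}(\delta)\mapsto q^{-1}(\delta)$ is a carrier $\mathcal{F}\to\mathcal{G}$ (if $\bigcap_i G^{-1}(\delta_i)=G^{-1}(\bigcap_i\delta_i)\neq\emptyset$ then $\bigcap_i\delta_i$ is a simplex and its $q$-preimage is nonempty, as $q$ is surjective), and $f$ is carried by it because $G$ agrees with $qf$ on $\partial D^{j+1}=S^j$. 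As $\dim D^{j+1}=j+1\le n$, the $AE(n)$-cover $\mathcal{G}$ is an $AE(D^{j+1})$-cover, so the Carrier Theorem extends $f$ to $\bar f\colon D^{j+1}\to q^{-1}(P')$ carried by the same carrier; in particular $\bar f|_{S^j}=f$, so $f$ is null-homotopic in $q^{-1}(P')$. The same argument with $j=0$ shows $q^{-1}(P')$ is nonempty and path-connected, so the conclusion holds regardless of basepoint.

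\emph{Step 3 (conclusion).} Applying Step 2 with $q=p$, $P=L$ and $P'=S_\sigma$ (contractible, by Step 1), each $p^{-1}(S_\sigma)$ is $k$-connected for all $k<n$; being a polyhedron it is also $ANE(n)$, so by Theorem~\ref{thm:dugundji} it is an $AE(n)$-space, and with Step 1 this shows $p^{-1}(\B{L})$ is an $AE(n)$-cover of $K$ (it is closed and locally finite by Lemma~\ref{lem:stars} and Lemma~\ref{lem:pullback properties}). For the weak $n$-homotopy equivalence, fix $k<n$ and a basepoint $x_0\in K$. For surjectivity of $p_*$ on $\pi_k$, given $\gamma\colon S^k\to L$ with $\gamma(\ast)=p(x_0)$, the cover $\gamma^{-1}(\B{L})$ of $S^k$ is closed, locally finite, and (being essentially finite, since $\gamma(S^k)$ is compact) locally finite-dimensional, and $\gamma^{-1}(\bst_L v)\mapsto p^{-1}(\bst_L v)$ is a carrier into the $AE(n)$-cover $p^{-1}(\B{L})$ of $K$; prescribing the value $x_0$ at $\ast$ and applying the Carrier Theorem produces $f\colon S^k\to K$ with $f(\ast)=x_0$ carried by this carrier, whence $pf$ and $\gamma$ are $\B{L}$-close and so $\B{L}$-homotopic rel $\ast$ by the Corollary to the Carrier Theorem; thus $p_\ast[f]=[\gamma]$. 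For injectivity, if $pf$ is null-homotopic in $L$ via $G\colon D^{k+1}\to L$, then $f$ is carried by the carrier $G^{-1}(\bst_L v)\mapsto p^{-1}(\bst_L v)$ on $D^{k+1}$ because $G$ restricts to $pf$ on $S^k$, and the Carrier Theorem extends $f$ to a null-homotopy $\bar f\colon D^{k+1}\to K$ of $f$; the basepoint adjustments are routine. Running this for $k=0,1,\dots,n-1$ (the case $k=0$ with $D^1=[0,1]$) shows $p$ is a weak $n$-homotopy equivalence.

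\emph{Main obstacle.} The heart of the argument is Step 2: $n$-regularity is precisely the hypothesis that makes $\{q^{-1}(\delta)\}$ an $AE(n)$-cover of $q^{-1}(P')$, and the key move is to realize every map and homotopy one must construct on a domain of dimension $\le n$, so that the Carrier Theorem and its Corollary can be invoked without their usual local-finiteness hypotheses. The remaining ingredients — the flag computation in Step 1, the fact that polyhedra in the metric topology are $ANE(n)$, and the basepoint bookkeeping in Step 3 — are routine.
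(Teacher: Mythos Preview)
Your overall architecture---reduce to showing each $p^{-1}(S_\sigma)$ is $AE(n)$, transport connectivity along $p$ via the Carrier Theorem, then deduce the weak $n$-equivalence by lifting along the $AE(n)$-cover $p^{-1}(\B{L})$---matches the paper's. Step~1 is correct, and Step~3 is essentially the paper's argument (there $\B{L}$ is genuinely locally finite by Lemma~\ref{lem:stars}, so compactness does give finiteness of the pull-back cover, as you say).

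The gap is in Step~2. You claim that because $G(D^{j+1})$ is compact it meets only finitely many simplices of $P'$, so that $\mathcal{F}=\{G^{-1}(\delta):\delta\in\tau(P')\}$ is finite. This is false in the metric topology when $P'$ is not locally finite: already in the ``infinite book'' (countably many edges sharing a vertex $v$) the single point $v$ lies in every closed edge, and a convergent sequence visiting all pages gives a compact set meeting infinitely many $1$-simplices. More generally, the closed cover $\tau(P')$ is not locally finite at any vertex with infinitely many cofaces, so its pull-back $\mathcal{F}$ need not be locally finite either, and the Carrier Theorem does not apply. Since the subcomplexes $P'=S_\sigma$ you need are closed stars in $\beta L$ and inherit any non-local-finiteness of $L$, this is exactly the situation you face; the paper emphasizes at the start of the section that removing local-finiteness restrictions is the whole point.

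The paper's remedy is the open-star swelling of Corollary~\ref{cor:open star swelling}: replace the closed cover by subcomplexes $\mathcal{K}=\{p^{-1}(\delta)\}$ with the \emph{open} cover $\ost_{\beta K}\mathcal{K}$, which is still an $AE(n)$-cover and is isomorphic to $\mathcal{K}$ (Lemma~\ref{lem:deformation}); then, because this cover is open, you may choose on $D^{j+1}$ an honestly finite closed refinement (Lebesgue number) and run the carrier argument exactly as you intended. With that single modification your Step~2 goes through, and the rest of your proof stands.
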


The main tool used in the proof is Theorem~\ref{thm:nerve theorem}, which when applied to a canonical map into the nerve of a cover~\cite{nagorko2007} implies the usual Nerve Theorem.

\subsection{Nerve Theorem for non locally finite covers}

\begin{lemma}\label{lem:deformation}
  If $K$ is a full subpolyhedron of a polyhedron $L$, then 
  \begin{enumerate}
  \item $K$ is a deformation retract of $\bst_L K$; and
  \item $K$ is a deformation retract of $\ost_L K$.
  \end{enumerate}
\end{lemma}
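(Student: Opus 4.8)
The plan is to build an explicit straight-line deformation retraction in the ambient $\ell_1$-space, using the fact that fullness of $K$ in $L$ controls which simplices of $L$ (resp.\ $\beta L$) appear in the barycentric star (resp.\ open star). First I would fix coordinates: every point $x \in L$ has barycentric coordinates, and I can split the vertex set $V(L)$ into $V(K)$ and its complement, writing $x = x_K + x_{K^c}$ where $x_K$ is the "part of $x$ supported on vertices of $K$" and $x_{K^c}$ the rest. The candidate retraction onto $K$ is the radial one: renormalize $x_K$ so that its barycentric coordinates sum to the correct total, i.e.\ $r(x) = x_K / \lVert x_K\rVert$ (in the $\kappa$-scaled $\ell_1$ picture, divide by the appropriate sum of coordinates), and the homotopy $H(x,t) = (1-t)x + t\,r(x)$ moving along the segment from $x$ to $r(x)$. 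The whole content is to show this is well-defined and lands in $K$ at all times.

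For part~(2), $\ost_L K$ consists exactly of those points $x$ whose carrier simplex $\delta$ of $L$ meets $K$, which (again using that the vertices of $\delta$ lying in $K$ form a face of $\delta$, automatically a simplex of $K$) means $x_K \neq 0$; so $r(x)$ is defined on $\ost_L K$, its value lies in the subsimplex of $\delta$ spanned by the $K$-vertices, hence in $K$; and the segment from $x$ to $r(x)$ stays inside $\delta$ because both endpoints do and $\delta$ is convex — and $\delta \subset \ost_L K$ since $\delta$ meets $K$. Continuity of $r$ is the routine point: on each simplex it is a rational function of barycentric coordinates with nonvanishing denominator, and in the metric topology a map is continuous iff its restriction to each simplex is, so this is clean. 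This shows $K$ is a strong deformation retract of $\ost_L K$.

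For part~(1), I would reduce to part~(2) passed through $\beta L$ rather than redo the argument. Here $K$ need not be a full subpolyhedron of $\beta L$, but the barycentric star $\bst_L K = \bigcup\{\delta \in \tau(\beta L) : \delta \cap K \neq \emptyset\}$ is exactly the open star $\ost_{\beta L} K'$ where $K'$ is the subcomplex of $\beta L$ carried by $K$ — more precisely, I would observe that $\bst_L K$ is the open star, in $\beta L$, of the vertex set $\{\hat\sigma : \sigma \in \tau(L),\ \sigma \subset K\}$ of $\beta L$ consisting of barycenters of simplices of $K$, and that this vertex set spans a full subpolyhedron $K_\beta$ of $\beta L$ (a simplex of $\beta L$ all of whose vertices are barycenters of faces of $K$ is itself a chain of faces of $K$, hence a simplex of $\beta K \subset$ the subdivision, lying in $K$). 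Applying part~(2) to the full subpolyhedron $K_\beta \subset \beta L$ gives a deformation retraction of $\ost_{\beta L} K_\beta = \bst_L K$ onto $K_\beta$, whose underlying polyhedron is $|K_\beta| = K$. I expect the main obstacle to be exactly this bookkeeping in~(1): verifying that the barycentric star equals the relevant open star in the subdivision and that the barycenter-vertex set is full, since fullness of $K$ in $L$ does not directly transfer to $\beta L$. Once that identification is made, everything follows from part~(2).
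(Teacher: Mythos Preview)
Your argument for part~(2) is essentially the paper's: split coordinates along $V_K$ and its complement, renormalise the $K$-part, and run the straight-line homotopy. One small correction: the justification ``in the metric topology a map is continuous iff its restriction to each simplex is'' is false for non-locally-finite complexes (that property characterises the weak topology, and the paper explicitly works with complexes that need not be locally finite). The conclusion is nevertheless correct, since the projection $x \mapsto x_K$ is $1$-Lipschitz in the ambient $\ell_1$-metric and $\lVert x_K\rVert_1$ is continuous, so $r = x_K/\lVert x_K\rVert_1$ is continuous on $\{x_K \neq 0\}$.

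The reduction in part~(1) has a genuine gap: the claimed equality $\bst_L K = \ost_{\beta L} K_\beta$ is false, because the left-hand side is a closed set (a union of closed simplices of $\beta L$) while the right-hand side is open. Concretely, take $L = [v_0,v_1]$ and $K = \{v_0\}$; with $b$ the barycentre, $\bst_L K = [v_0,b]$ but $\ost_{\beta L} K_\beta = [v_0,b)$. So applying part~(2) in $\beta L$ only gives a deformation retraction of the open star, not of $\bst_L K$, and there is no cheap way to extend it over the missing boundary. The paper handles~(1) not by passing to the subdivision but by reusing the \emph{same} homotopy $\Phi(x,t) = (1-t)x + t\,r(x)$ from~(2) and checking directly that it preserves $\bst_L K$, via the coordinate description
\[
\bst_L K = \bigl\{\, x \in L : \exists\, v \in V_K \text{ with } (x)_v \geq \max_{w \in V_L} (x)_w \,\bigr\}.
\]
Along the segment the $K$-coordinates are scaled up and the non-$K$-coordinates scaled down, so this ``maximum attained on $V_K$'' condition is preserved; that is the missing idea.
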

\begin{proof}
  We let $V_L$ denote the set of vertices of $L$ and $V_K \subset V_L$ denote the set of vertices of $K$.
  We regard $L$ as a subspace of $\ell_1(V_L)$ (see Definition~\ref{def:polyhedron metric}).
  Let $p \colon L \to \ell_1(V_L)$ be the map that sends all coordinates that do not belong to $K$ to $0$, defined by the formula
  \[
  \left( p(x) \right)_v =
  \left\{
  \begin{array}{ll}
  0 & v \in V_L \setminus V_K \\
  (x)_v & v \in V_K
  \end{array}
  \right..
  \]
  
  We have
  \[
    \ost_L K = \{ x \in L \colon p(x) \neq 0 \}.
  \]
  Let $q \colon \ost_L K \to L$ be defined by the formula
  \[
    q(x) = \frac{p(x)}{\| p(x) \|_1}.
  \]
  We define a map $\varPhi \colon \ost_L K \times [0, 1] \to L$ by the formula
  \[
  \varPhi(x, t) = t \cdot q(x) + (1-t)\cdot x\text{.}
  \]
  Observe that $\supp \varPhi(x, t) \subset \supp(p(x))$ and $V_K \cap \supp \varPhi(x, t) \neq \emptyset$, hence $\varPhi(x, t) \in \ost_L K$. 
  Note that $\varPhi(x, 1) \in K$ because $K$ is a full subcomplex.
  Hence $\varPhi$ is a deformation retraction of $\ost_L K$ to $K$. 
  
  Observe that
  \[
    \bst_L K = \{ x \in L \colon \exists_{v \in V_K} (x)_v \geq \max \{ (x)_w \colon w \in V_L \} \}.
  \]
  Since $\varPhi$ preserves the relation $\exists_{v \in V_K} (x)_v \geq \max \{ (x)_w \colon w \in V_L \}$, $\varPhi$ restricted to $\bst_L K \times [0,1]$ is into $\bst_L K$, hence it is a deformation retraction of $\bst_L K$ to $K$.
\end{proof}

\begin{corollary}\label{cor:open star swelling}
  If $K$ is a polyhedron and $\mathcal{K} = \{ K_i \}_{i \in I}$ is an $AE(n)$-cover of $K$ by subcomplexes, then the collection
  \[
    \ost_{\beta K} \mathcal{K} = \{ \ost_{\beta K} K_i \}_{i \in I}
  \]
  is an open $AE(n)$-cover of $K$.
  Moreover, $\ost_{\beta K} \mathcal{K}$ and $\mathcal{K}$ are isomorphic covers of $K$.
\end{corollary}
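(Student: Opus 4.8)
The plan is to reduce the entire statement to the combinatorial identity
\[
  \bigcap_{j\in J}\ost_{\beta K}K_j \;=\; \ost_{\beta K}\Big(\bigcap_{j\in J}K_j\Big)
  \qquad(J\subseteq I),
\]
in which, for a subcomplex $M$ of $K$, the set $\ost_{\beta K}M$ is read as $\ost_{\beta K}(\beta M)$ (the two agree since $|\beta M|=|M|$, and $\beta M$ is an honest subcomplex of $\beta K$). To prove the identity I would first record the flag description of stars in $\beta K$: a simplex $\delta=\langle\hat\sigma_0,\dots,\hat\sigma_m\rangle$ of $\beta K$ comes from a flag $\sigma_0\subsetneq\dots\subsetneq\sigma_m$ in $\tau(K)$, the carrier in $K$ of an interior point of any face of $\delta$ is the largest member of the corresponding subflag, and $M$ is closed under passage to faces; hence $\delta$ meets $|M|$ iff some $\sigma_i$ lies in $\tau(M)$, which gives $\ost_{\beta K}M=\bigcup_{\sigma\in\tau(M)}\ost_{\beta K}\hat\sigma$. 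Now if $x$ lies in every $\ost_{\beta K}K_j$, the flag of the carrier of $x$ in $\beta K$ contains, for each $j\in J$, a simplex from $\tau(K_j)$; since the members of a flag are linearly ordered by inclusion, the smallest of these is a face of all of them, and by face-closedness it lies in $\bigcap_{j\in J}\tau(K_j)=\tau\big(\bigcap_{j\in J}K_j\big)$, so $x\in\ost_{\beta K}(\bigcap_{j\in J}K_j)$. The reverse inclusion is obvious.

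Granting the identity, the elementary assertions follow at once. Each $\ost_{\beta K}K_i$ is open in $K$ and contains $K_i$, so $\ost_{\beta K}\mathcal{K}$ is an open cover of $K$. Because $\ost_{\beta K}\emptyset=\emptyset$ and $M\subseteq\ost_{\beta K}M$ for every subcomplex $M$, the identity shows $\bigcap_{j\in J}\ost_{\beta K}K_j\neq\emptyset$ exactly when $\bigcap_{j\in J}K_j\neq\emptyset$, i.e.\ $\ost_{\beta K}\mathcal{K}$ and $\mathcal{K}$ are isomorphic covers.

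It remains to check that $\ost_{\beta K}\mathcal{K}$ is an $AE(n)$-cover. Fix $J\subseteq I$ with $\bigcap_{j\in J}\ost_{\beta K}K_j\neq\emptyset$ and put $M=\bigcap_{j\in J}K_j$; by the identity this intersection equals $\ost_{\beta K}M$, and $M$ is a nonempty subcomplex, so $M\in AE(n)$ because $\mathcal{K}$ is an $AE(n)$-cover. Thus it suffices to prove $\ost_{\beta K}M\in AE(n)$, and I would do this by verifying the two conditions of Theorem~\ref{thm:dugundji}. For the connectivity condition: $\beta M$ is a full subpolyhedron of $\beta K$ (vertices of $\beta M$ that span a simplex of $\beta K$ form a flag whose members all lie in $M$, hence span a simplex of $\beta M$), so Lemma~\ref{lem:deformation}(2) exhibits $M$ as a deformation retract of $\ost_{\beta K}M$; since $M$ is $k$-connected for $k<n$ and $k$-connectedness is a homotopy invariant, so is $\ost_{\beta K}M$. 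For the $ANE(n)$ condition: by the union formula above every point of $\ost_{\beta K}M$ lies in some $\ost_{\beta K}\hat\sigma$ with $\sigma\in\tau(M)$, and $\ost_{\beta K}\hat\sigma$ is an open subset of $\ost_{\beta K}M$ which is $AE(\infty)$ by Lemma~\ref{lem:stars} applied to $\beta K$, hence locally $k$-connected for all $k$; as local $k$-connectedness is a local property, $\ost_{\beta K}M$ is locally $k$-connected for all $k<n$, i.e.\ $\ost_{\beta K}M\in ANE(n)$. Combining the two, $\ost_{\beta K}M\in AE(n)$, which completes the argument.

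The only step that requires genuine care is the displayed identity — that passing to open stars in $\beta K$ commutes with intersecting subcomplexes — and it rests entirely on the flag description of the simplices of $\beta K$ together with the fact that subcomplexes are closed under faces. Once that is in hand, the rest is routine bookkeeping with Lemmas~\ref{lem:deformation} and~\ref{lem:stars} and Dugundji's theorem.
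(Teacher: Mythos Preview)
Your proof is correct and follows the same route as the paper: reduce everything to the identity $\bigcap_{j\in J}\ost_{\beta K}K_j=\ost_{\beta K}\bigl(\bigcap_{j\in J}K_j\bigr)$ and then invoke Lemma~\ref{lem:deformation} on the full subpolyhedron $\beta M\subset\beta K$. You supply considerably more detail than the paper does---in particular a careful flag argument for the identity and a separate verification of local $k$-connectedness via Lemma~\ref{lem:stars} (the paper's one-line ``an application of Lemma~\ref{lem:deformation} finishes the proof'' leaves the $ANE(n)$ half of Dugundji's criterion implicit)---but the underlying strategy is identical.
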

\begin{proof}
  Observe that if $\{ A_j \}_{j \in J}$ is a collection of subcomplexes of a polyhedron~$K$, then
  \[
     \bigcap_{j \in J} \ost_{\beta K} A_j = 
       \ost_{\beta K} \bigcap_{j \in J} A_j.
  \]
  An application of Lemma~\ref{lem:deformation} finishes the proof.
\end{proof}

\begin{lemma}\label{lem:uclose uhomotopic}
  If $\U{}$ is an open $AE(n)$-cover of a space $Y$,
    then any two $\U{}$-close maps from a metric space of dimension less than $n$ into $Y$ are $\U{}$-homotopic.
\end{lemma}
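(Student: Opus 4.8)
The plan is to construct the required homotopy $H\colon Z\times[0,1]\to Y$ (where $f,g\colon Z\to Y$ are the two $\U{}$-close maps and $\dim Z<n$) by extending $f\cup g$, which is already defined on the closed subset $Z\times\{0,1\}$, over all of $Z\times[0,1]$ by means of the Carrier Theorem of Section~\ref{sec:carrier theorem}. The real work is to produce, on the domain $Z\times[0,1]$, a closed, locally finite, locally finite-dimensional cover together with a carrier into $\U{}$ that carries $f\cup g$.

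First I would pull $\U{}$ back to $Z$: set $V_U=f^{-1}(U)\cap g^{-1}(U)$ for $U\in\U{}$, so that $\{V_U\}_{U\in\U{}}$ is an open cover of $Z$ by $\U{}$-closeness. Since $Z$ is a metric space with $\dim Z\le n-1$, this cover admits a locally finite open refinement $\{W_j\}_{j\in J}$ in which at most $n$ members have a common point; for each $j$ fix $U_j\in\U{}$ with $W_j\subseteq V_{U_j}$, so that $f(W_j)\cup g(W_j)\subseteq U_j$. Then choose an open shrinking $\{W_j'\}_{j\in J}$, with $\Cl W_j'\subseteq W_j$ and $\{W_j'\}$ still covering $Z$ (available since metric spaces are paracompact).

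Next I would set $\F{}=\{\Cl W_j'\times[0,1]\}_{j\in J}$, a cover of $X:=Z\times[0,1]$, and define $C\colon\F{}\to\U{}$ by $C(\Cl W_j'\times[0,1])=U_j$. The verifications are routine: $\F{}$ is closed and locally finite, inherited from $\{W_j\}$; its nerve has dimension at most $n-1$, since $\bigcap_{j\in J'}(\Cl W_j'\times[0,1])\neq\emptyset$ forces $\bigcap_{j\in J'}W_j\neq\emptyset$ and hence $|J'|\le n$, so $\F{}$ is locally finite-dimensional; $C$ is a carrier, because any point of $\bigcap_{j\in J'}\Cl W_j'$ lies in $\bigcap_{j\in J'}V_{U_j}$, so its image under $f$ lies in $\bigcap_{j\in J'}U_j$; and $f\cup g$ is carried by $C$, because $f(\Cl W_j')\cup g(\Cl W_j')\subseteq f(W_j)\cup g(W_j)\subseteq U_j$. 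Since $\U{}$ is an $AE(n)$-cover of $Y$ and $\dim X\le\dim Z+1\le n$, the cover $\U{}$ is an $AE(X)$-cover, so the Carrier Theorem extends $f\cup g$ to a map $H\colon X\to Y$ carried by $C$. Finally, given $z\in Z$, choosing $j$ with $z\in W_j'$ gives $\{z\}\times[0,1]\subseteq\Cl W_j'\times[0,1]$, hence $H(\{z\}\times[0,1])\subseteq U_j\in\U{}$, so $H$ is a $\U{}$-homotopy from $f$ to $g$.

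The one step that needs care is the construction of $\F{}$, and this is exactly where the hypothesis $\dim Z<n$ is used, twice: to get a refinement in which at most $n$ sets meet (local finiteness of the cover alone does \emph{not} make the nerve locally finite-dimensional), and to keep $\dim(Z\times[0,1])\le n$ so that the $AE(n)$-cover $\U{}$ qualifies as an $AE(X)$-cover in the Carrier Theorem. Everything else — the shrinking lemma, the local-finiteness bookkeeping, and the carrier inequalities — is straightforward.
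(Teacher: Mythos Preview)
Your proof is correct and follows essentially the same route as the paper's: pull back $\U{}$ to $\{V_U\}$, refine to a closed locally finite cover of multiplicity at most $n$, cross with $[0,1]$, and apply the Carrier Theorem to extend $f\cup g$. The only cosmetic difference is that you obtain the closed refinement via an open refinement followed by a shrinking, whereas the paper takes a closed refinement $\W{}$ of $\V{}$ directly and writes the carrier as a composition $C\circ D\circ E$ rather than collapsing it into a single map.
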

\begin{proof}
  Let $\U{}$ be an open $AE(n)$-cover of a metric space $Y$.
  Let $X$ be a metric space of dimension less than $n$.
  Let $f, g \colon X \to Y$ be $\U{}$-close.
  We have to show that $f$ and $g$ are homotopic
    by a homotopy whose paths refine $\U{}$.
  
  Let $\V{} = \{ V_U \}_{U \in \U{}}$ be the collection of subsets of $X$ defined by $V_U = f^{-1}(U) \cap g^{-1}(U)$. Since $f$ and $g$ are $\U{}$-close, $\V{}$ is an open cover of $X$.
  
  Let $\W{}$ be a closed, locally finite cover of $X$ with multiplicity at most $n$ that refines $\V{}$.
  %{\color{red} citation needed.}
  Let $\F{} = \{ W \times [0,1] \colon W \in \W{} \}$ be a cover of $X \times [0,1]$.
  Note that $\F{}$ is a closed, locally finite, locally finite-dimensional cover of $X \times [0,1]$.
  
  Let $H_0 \colon K \times \{ 0, 1 \} \to Y$ be the map defined by 
    $H_0(x, 0) = f(x)$ and $H_0(x,1) = g(x)$.

  Let $C \colon \V{} \to \U{}$ be the map defined by $C(V_U) = U$. It is a carrier and both $f$ and $g$ are carried by $C$, directly from the definition of $\V{}$.
  Let $D \colon \W{} \to \V{}$ be a map such that for each $W \in \W{}$ we have $W \subset D(W)$. The map $D$ exists because $\W{}$ refines $\V{}$.
  Let $E \colon \F{} \to \W{}$ be a map defined by $D(W \times [0,1]) = W$.
  The composition $C \circ D \circ E$ is a carrier and $H_0$ is carried by it.
  Observe that $\U{}$ is an $AE(X \times [0,1])$-cover of $Y$~(see Definition~\ref{def:AE(C)}).
  By the Carrier Theorem, $H_0$ can be extended to a map $H \colon X \times [0,1]$ that is carried by $C \circ D \circ E$.
  Clearly, $H$ is a homotopy between $f$ and $g$.
  For each path $\{ x \} \times [0, 1] \subset X \times [0,1]$ there exists element $W \times [0, 1] \in \F{}$ such that $\{ x \} \times [0, 1] \subset W \times [0, 1]$, as $\W{}$ is a cover of $X$. Since $H$ is carried into $\U{}$, the whole path lies in an element of $\U{}$. Therefore $H$ is a $\U{}$-homotopy.
\end{proof}

\begin{theorem}\label{thm:nerve theorem}
  Let $\mathcal{K} = \{ K_i \}_{i \in I}$ be a cover of a polyhedron $K$ by subcomplexes.
  Let $\mathcal{L} = \{ L_i \}_{i \in I}$ be a cover of a polyhedron $L$ by subcomplexes.
  %Assume that $\mathcal{K}$ and $\mathcal{L}$ are isomorphic.
  Let $p \colon K \to L$ be a surjective simplicial map that maps elements of~$\mathcal{K}$ into the corresponding elements of $\mathcal{L}$ (i.e. $p(K_i) \subset L_i$ for each $i \in I$).
  If $\mathcal{K}$ and $\mathcal{L}$ are isomorphic $AE(n)$-covers, then $p$ is a weak $n$-homotopy equivalence.
\end{theorem}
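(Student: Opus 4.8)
The plan is to prove that $p$ induces isomorphisms on $\pi_k$ for $k < n$ by a standard obstruction-theoretic argument, using the isomorphic $AE(n)$-covers to push maps of low-dimensional spheres and disks back and forth between $K$ and $L$. The key reduction is that a map $f\colon Z \to L$ from a space $Z$ of dimension $\le k < n$ can be "lifted" up to homotopy to a map $\tilde f \colon Z \to K$ with $p \circ \tilde f$ $\mathcal{L}$-homotopic to $f$, and conversely that any two maps into $K$ which become $\mathcal{L}$-close after composing with $p$ are already $\mathcal{K}$-close (hence $\mathcal{K}$-homotopic, by Corollary to the Carrier Theorem together with Lemma~\ref{lem:uclose uhomotopic} applied to the open swelling $\ost_{\beta K}\mathcal{K}$). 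Since $\mathcal{K}$-homotopic maps into a space with a well-behaved cover are genuinely homotopic through small motions, this yields that $\pi_k(p)$ is both surjective and injective.

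In more detail, I would proceed as follows. First, replace the closed covers $\mathcal{K}$ and $\mathcal{L}$ by the isomorphic \emph{open} $AE(n)$-covers $\mathcal{U} = \ost_{\beta K}\mathcal{K}$ and $\mathcal{V} = \ost_{\beta L}\mathcal{L}$ furnished by Corollary~\ref{cor:open star swelling}; note that since $p$ is simplicial and $p(K_i) \subset L_i$, after passing to barycentric subdivisions $p$ maps $\ost_{\beta K} K_i$ into $\ost_{\beta L} L_i$, so $p$ carries $\mathcal{U}$ into $\mathcal{V}$ index-by-index. Second, for surjectivity of $\pi_k(p)$: given $\varphi \colon S^k \to L$ with $k < n$, use the isomorphism of the nerves of $\mathcal{U}$ and $\mathcal{V}$ together with the fact that $\mathcal{U}$ is an open $AE(n)$-cover to build, via the Carrier Theorem, a map $\psi \colon S^k \to K$ that is carried by the carrier $V_i \mapsto U_i$ composed with the covering data of a refinement of $\varphi^{-1}(\mathcal{V})$ of multiplicity $\le n$; concretely, one takes a closed locally finite refinement $\mathcal{W}$ of $\varphi^{-1}(\mathcal{V})$ with $\mesh \le n$, defines a carrier from $\mathcal{W}$ to $\mathcal{U}$ through the index correspondence, and extends the empty partial map. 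Then $p \circ \psi$ and $\varphi$ are both carried by $\mathcal{W} \to \mathcal{V}$, hence $\mathcal{V}$-close, hence (by Lemma~\ref{lem:uclose uhomotopic}, as $\dim S^k < n$) $\mathcal{V}$-homotopic; in particular $[\varphi] = [p\circ\psi] = \pi_k(p)[\psi]$. Third, for injectivity: if $\psi_0, \psi_1 \colon S^k \to K$ satisfy $p\circ\psi_0 \simeq p\circ\psi_1$, realize the homotopy as a map $S^k \times [0,1] \to L$ of domain dimension $k+1 \le n$ — here is where $k < n$, not merely $k \le n$, matters, and one must be slightly careful, handling $\pi_k$ for $k \le n-1$ so that $S^k \times I$ has dimension $\le n$ and the multiplicity-$n$ refinement and $AE(n)$ extension still apply — lift it through the same carrier machinery to a homotopy $S^k\times[0,1] \to K$ whose restriction to the ends is $\mathcal{U}$-close to $\psi_0$ and $\psi_1$ respectively, and then use Lemma~\ref{lem:uclose uhomotopic} once more to close up the ends, concluding $\psi_0 \simeq \psi_1$.

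A cleaner organization, which I would actually follow, is to invoke the canonical map to the nerve: let $N$ be the common nerve of $\mathcal{U}$ and $\mathcal{V}$ (they have the same nerve since the covers are isomorphic with the same indexing set). By the standard Nerve Theorem in the form of~\cite{nagorko2007} — or rather by the argument underlying it, applied to the open $AE(n)$-covers $\mathcal{U}$ and $\mathcal{V}$, for which the requisite local finiteness enters only through the dimension-$< n$ domains — the canonical maps $K \to N$ and $L \to N$ are weak $n$-homotopy equivalences, and they commute up to $\mathcal{V}$-homotopy with $p$ because $p$ respects the index correspondence. Then $\pi_k(p)$ for $k < n$ factors as an isomorphism composed with the inverse of an isomorphism, hence is an isomorphism.

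The main obstacle I anticipate is the bookkeeping in the injectivity step: one is extending a partially defined map on $S^k \times \{0,1\}$ over $S^k \times [0,1]$, and to apply the Carrier Theorem one needs a closed, locally finite, locally finite-dimensional cover of $S^k \times [0,1]$ refining the pullback of $\mathcal{V}$ and compatible with the boundary data — exactly the construction performed in the proof of Lemma~\ref{lem:uclose uhomotopic}, so the work is to adapt that construction with the partial map $H_0$ replaced by the two given maps $\psi_0,\psi_1$ into $K$ (not just $\mathcal{V}$-close maps into $L$), which forces one to first show $\psi_0$ and $\psi_1$ are $\mathcal{U}$-close to suitable restrictions, and this in turn needs the isomorphism of the covers to transport the $\mathcal{V}$-closeness of $p\psi_0, p\psi_1$ back to $\mathcal{U}$-closeness of $\psi_0,\psi_1$. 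Once that transport principle is isolated as a small lemma, the rest is routine.
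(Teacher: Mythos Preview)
Your surjectivity argument is essentially the paper's: swell to the open $AE(n)$-covers $\mathcal{U}=\ost_{\beta K}\mathcal{K}$, $\mathcal{V}=\ost_{\beta L}\mathcal{L}$, build a carrier from a fine closed refinement of $\psi^{-1}(\mathcal{V})$ into $\mathcal{U}$, apply the Carrier Theorem to get $\tilde\psi$, and conclude by Lemma~\ref{lem:uclose uhomotopic}.

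Your injectivity argument, however, has a genuine gap. The ``transport principle'' you isolate --- that $\mathcal{V}$-closeness of $p\psi_0,p\psi_1$ (or of $p\tilde H_0,p\psi_0$) can be pulled back to $\mathcal{U}$-closeness of $\psi_0,\psi_1$ --- is \emph{false}. Isomorphism of the covers only says which finite intersections are nonempty; it gives no control over $p^{-1}(V_i)$ versus $U_i$. Concretely, knowing $p\psi_0(x)\in V_i$ tells you only $\psi_0(x)\in p^{-1}(V_i)$, which may be far larger than $U_i$. So after lifting the homotopy with empty boundary data you cannot ``close up the ends''.

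The paper avoids this by never needing such a transport. For injectivity it takes $\varphi\colon S^m\to K$ with null-homotopy $\Phi\colon B^{m+1}\to L$ of $p\varphi$, and chooses a fine closed cover $\mathcal F$ of $B^{m+1}$ so that for each $F$ one picks $i_F$ satisfying \emph{both}
\[
\varphi(F\cap S^m)\subset \ost_{\beta K}K_{i_F}
\quad\text{and}\quad
\Phi(F)\subset \ost_{\beta L}L_{i_F}.
\]
This is possible because on $S^m$ one has $\Phi=p\circ\varphi$ and $p(\ost_{\beta K}K_i)\subset\ost_{\beta L}L_i$, so an index chosen for $\varphi$ automatically works for $\Phi$ there; away from $S^m$ only the second condition is needed. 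The second condition makes $F\mapsto \ost_{\beta L}L_{i_F}$ a carrier; isomorphism of the covers makes $F\mapsto \ost_{\beta K}K_{i_F}$ a carrier; the first condition says $\varphi$ is already carried by it. One Carrier Theorem application then extends $\varphi$ over $B^{m+1}$ directly --- no end-fixing, no transport lemma. Your scheme can be repaired the same way: prescribe $\psi_0,\psi_1$ as boundary data and choose the indices on boundary pieces from $\psi_j$, not from $H$.

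Finally, the ``cleaner organization'' via the canonical map to the nerve is circular here: the whole point of Theorem~\ref{thm:nerve theorem} is to obtain a Nerve-type statement \emph{without} local finiteness assumptions on the cover, and the standard canonical-map argument (partition of unity subordinate to the cover, etc.) needs exactly those assumptions.
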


\begin{proof}
  To begin, we will show that $p$ induces monomorphisms on homotopy groups of dimensions less than $n$.
  Let $m < n$. Let $\varphi \colon S^m \to K$. 
  Assume that $p \circ \varphi$ is null-homotopic in $L$. 
  Let $\varPhi \colon B^{m+1} \to L$ denote such a null-homotopy ($\varPhi_{\mid S^m} = p \circ \varphi$).
  We have to show that $\varphi$ is null-homotopic in $K$.

  Let $\F{}$ be a finite closed cover of $B^{m+1}$ with mesh small enough so that for each $F \in \F{}$ we can pick $i_F \in I$ such that
  \[
    (\ast)\  \varphi(F \cap S^m) \subset \ost_{\beta K} K_{i_F} \text{ and } 
    (\ast\ast)\  \varPhi(F) \subset \ost_{\beta L} L_{i_F}.
  \]
  By $(\ast\ast)$, a map $C \colon \mathcal{F} \to \ost_{\beta L} \mathcal{L}$ defined by $C(F) = \ost_{\beta L} L_{i_F}$ is a carrier.
  Since $\mathcal{K}$ and $\mathcal{L}$ are isomorphic, the map $C' \colon \mathcal{F} \to \ost_{\beta K} \mathcal{K}$ defined by $C'(F) = \ost_{\beta K} K_{i_F}$ is a carrier as well.
  By $(\ast)$, $\varphi$ is carried by $C'$.
  By Corollary~\ref{cor:open star swelling}, $\ost_{\beta K} \mathcal{K}$ is an $AE(n)$-cover.
  By the Carrier Theorem, $\varphi$ extends to a map $\tilde \varphi \colon B^{m+1} \to K$ that is carried by $C'$.
  This is a null-homotopy of $\varphi$ in $K$ and we are done with the proof that $p$ induces monomorphisms on homotopy groups of dimensions less than $n$.
  
  Next we will show that $p$ induces epimorphisms on homotopy groups of dimensions less than $n$.
Let $m < n$.
Let $\psi \colon S^m \to L$.
Let $\G{}$ be a closed finite cover of $S^m$ that refines $\psi^{-1}(\ost_{\beta L} \mathcal{L})$.
For each $G \in \G{}$ pick $i_G \in I$ such that $\psi(G) \subset \ost_{\beta L} L_{i_G}$.
Then the map $D \colon \G{} \to \ost_{\beta L} \mathcal{L}$ defined by $D(G) = \ost_{\beta L} L_{i_G}$ is a carrier and $\psi$ is carried by $D$. 
Since $\mathcal{K}$ and $\mathcal{L}$ are isomorphic, the map $D'(G) = \ost_{\beta K} K_{i_G}$ is a carrier as well.
By Corollary~\ref{cor:open star swelling}, $\ost_{\beta K} \mathcal{K}$ is an $AE(n)$-cover.
By the Carrier Theorem, there exists a map $\tilde \psi \colon S^m \to K$ that is carried by $D'$.
Observe that if $x \in G \in \G{}$, then $\tilde \psi(x) \in \ost_{\beta_K} K_{i_G}$ and $\psi(x) \in \ost_{\beta L} L_{i_G}$.
Since $p(K_{i_G}) \subset L_{i_G}$, the maps $\psi$ and $p \circ \tilde \psi$ are $\ost_{\beta L} \mathcal{L}$-close.
By Corollary~\ref{cor:open star swelling}, $\ost_{\beta L} \mathcal{L}$ is an $AE(n)$-cover and by
 Lemma~\ref{lem:uclose uhomotopic}, $\psi$ and $p \circ \tilde \psi$ are homotopic.
Hence $p$ induces epimorphisms on homotopy groups of dimensions less than $n$. This concludes the second part of the proof.
\end{proof}

\subsection{Proof of Theorem~\ref{thm:nregular pullback}}

\begin{lemma}\label{lem:simplicial regular}
  Let $K$ and $L$ be polyhedra.
  If $p \colon K \to L$ is a simplicial, surjective map such that for each $\delta \in \tau(L)$ the inverse image $p^{-1}(\delta)$ is an $AE(n)$, then $p$ is a weak $n$-homotopy equivalence.
\end{lemma}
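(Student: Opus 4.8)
The plan is to deduce this from Theorem~\ref{thm:nerve theorem} by exhibiting isomorphic $AE(n)$-covers of $K$ and $L$ that are compatible with $p$. I would take $\mathcal{L} = \{\, \delta \,\}_{\delta \in \tau(L)}$, the cover of $L$ by all of its closed simplices (indexed by $\tau(L)$), and let $\mathcal{K} = p^{-1}(\mathcal{L}) = \{\, p^{-1}(\delta) \,\}_{\delta \in \tau(L)}$ be its pull-back. Both are genuine covers by subcomplexes: a closed simplex is a subcomplex of $L$, every point of $L$ lies in one, and since $p$ is simplicial the preimage of a subcomplex is a subcomplex of $K$. Moreover $p(p^{-1}(\delta)) \subseteq \delta$ for each $\delta$, so $p$ maps each member of $\mathcal{K}$ into the corresponding member of $\mathcal{L}$; and since $p$ is surjective, Lemma~\ref{lem:pullback properties}(1) shows $\mathcal{K}$ and $\mathcal{L}$ are isomorphic covers (both carry the index set $\tau(L)$).

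The heart of the argument is that both covers are $AE(n)$-covers. For $\mathcal{L}$ I would use the combinatorial fact that, in a simplicial complex, an arbitrary intersection of closed simplices is again a closed simplex or is empty: for $S \subseteq \tau(L)$, a point lies in $\bigcap_{\delta \in S} \delta$ exactly when the simplex in whose relative interior it lies is a common face of every $\delta \in S$, so $\bigcap_{\delta\in S}\delta$ equals the closed simplex spanned by $\bigcap_{\delta \in S} V(\delta)$ when that vertex set is nonempty, and is empty otherwise. A closed simplex is a finite-dimensional polyhedron, hence an $ANE(\infty)$-space by Lemma~\ref{lem:complex is ane}, and it is contractible, hence $k$-connected for all $k$; by Theorem~\ref{thm:dugundji} it is therefore an $AE(\infty)$-space, in particular $AE(n)$. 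Thus every nonempty intersection of members of $\mathcal{L}$ is $AE(n)$, so $\mathcal{L}$ is an $AE(n)$-cover. For $\mathcal{K}$, preimages commute with intersections, so $\bigcap_{\delta\in S} p^{-1}(\delta) = p^{-1}\!\big(\bigcap_{\delta\in S}\delta\big)$, which by the previous observation is either empty or of the form $p^{-1}(\epsilon)$ for a single simplex $\epsilon$ of $L$; the latter is $AE(n)$ by hypothesis, so $\mathcal{K}$ is an $AE(n)$-cover.

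Having arranged isomorphic $AE(n)$-covers of $K$ and $L$ by subcomplexes, together with a surjective simplicial map $p \colon K \to L$ carrying $\mathcal{K}$ memberwise into $\mathcal{L}$, I would invoke Theorem~\ref{thm:nerve theorem} to conclude that $p$ is a weak $n$-homotopy equivalence. No local finite-dimensionality of $K$ or $L$ enters: single closed simplices are finite-dimensional, and Theorem~\ref{thm:nerve theorem} carries no such hypothesis. The only mildly delicate step is the combinatorial identity for intersections of closed simplices, which is precisely what makes the hypothesis ``$p^{-1}(\delta)$ is $AE(n)$ for each $\delta \in \tau(L)$'' suffice; everything else is bookkeeping with the covers and the cited results.
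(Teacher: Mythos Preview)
Your proposal is correct and follows exactly the paper's approach: the paper's proof is the one-liner ``Apply Theorem~\ref{thm:nerve theorem} with the cover $\mathcal{L} = \tau(L)$ of $L$ and the cover $\mathcal{K} = p^{-1}(\mathcal{L})$ of $K$,'' and you have faithfully supplied the verifications (isomorphism via surjectivity, the $AE(n)$-cover property via the fact that intersections of closed simplices are again simplices) that the paper leaves implicit.
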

\begin{proof}
  Apply Theorem~\ref{thm:nerve theorem} with the cover $\mathcal{L} = \tau(L)$ of $L$ and the cover $\mathcal{K} = p^{-1}(\mathcal{L})$ of $K$.
\end{proof}

%\begin{theorem}
%  If a map $p \colon K \to L$ is $n$-regular, then it is a weak $n$-homotopy equivalence and the pull-back $p^{-1}(\B{L})$ is an $AE(n)$-cover.
%\end{theorem}
\begin{proof} (Proof of Theorem~\ref{thm:nregular pullback})
  The map $p$ is simplicial onto $\beta K$ and satisfies the conditions of Lemma~\ref{lem:simplicial regular}, hence it is a weak $n$-homotopy equivalence.
  
  Let $\mathcal{A} \subset \B{L}$ such that $\bigcap_{A \in \mathcal{A}} A \neq \emptyset$.
  By Lemma~\ref{lem:stars}, $\bigcap_{A \in \mathcal{A}} A$ is an absolute extensor in dimension $n$.
  We have $B = \bigcap_{A \in \mathcal{A}} p^{-1}(A) = p^{-1}(\bigcap_{A \in \mathcal{A}} A)$.
  The map $p_{|B} \colon B \to \bigcap_{A \in \mathcal{A}} A$ satisfies the conditions of Lemma~\ref{lem:simplicial regular}, hence it is a weak $n$-homotopy equivalence.
  Therefore $B$ has vanishing homotopy groups in dimensions less than $n$ and by Theorem~\ref{thm:dugundji}, it is an absolute extensor in dimension $n$. 
  Hence $p^{-1}(\B{L})$ is an $AE(n)$-cover.
\end{proof}

\section{A lifting condition}

%\begin{definition}
%  We say that a cover $\F{}$ is \df{$n$-regular} if it is closed, locally finite, locally finite dimensional and for each $\mathcal{A}\subset \F{}$ the intersection~$\bigcap \mathcal{A}$ is either $AE(n)$ or empty.
%\end{definition}

\begin{lemma}\label{lem:single lift}
  Let $p \colon Y \to Z$ be a surjective map of metric spaces.
  Let~$\F{}$ be a cover of $Z$.
  Assume that either $\F{}$ is an open cover or $\F{}$ is a
    closed, locally finite, locally finite-dimensional cover.
  Assume that $p^{-1}(\F{})$ is an $AE(n)$-cover.

  Let $X$ be an at most $n$-dimensional metric space.
  Let $f \colon X \to Z$.
  Let $A$ be a closed subset of $X$.
  Let $g_0 \colon A \to Y$ be a map such that $p \circ g_0 = f_{|A}$ (a lift of $f$ on $A$).
  
  Then there exists a map $g \colon X \to Y$ that 
  satisfies the following conditions:
  \begin{enumerate}
  \item $g_{|A} = g_0$ ($g$ extends $g_0$); and
  \item $p\circ g$ is $\F{}$-close to $f$.
  \end{enumerate}
\end{lemma}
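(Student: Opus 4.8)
The plan is to obtain $g$ from a single application of the Carrier Theorem. I would build a carrier $C$ from a suitable closed, locally finite, locally finite-dimensional cover $\W{}$ of $X$ into the $AE(n)$-cover $p^{-1}(\F{})$ of $Y$, check that the given partial lift $g_0$ (which is defined on the closed set $A$) is carried by $C$, extend it to a map $g \colon X \to Y$ carried by $C$, and then read off condition~(2) from the fact that $g$ is carried by $C$.

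First I would produce the cover $\W{}$ of $X$, and this is where the two alternatives in the hypothesis are used. If $\F{}$ is closed, locally finite, and locally finite-dimensional, then by Lemma~\ref{lem:pullback properties} so is $f^{-1}(\F{})$, and I would take $\W{} = f^{-1}(\F{})$. If $\F{}$ is only assumed open, then $f^{-1}(\F{})$ is open but need be neither locally finite nor locally finite-dimensional, so it cannot be handed to the Carrier Theorem directly; here I would use $\dim X \leq n$ to choose a closed, locally finite refinement $\W{}$ of $f^{-1}(\F{})$ of multiplicity at most $n+1$, so that its nerve is finite-dimensional. In either case $\W{}$ refines $f^{-1}(\F{})$, so for each $W \in \W{}$ I may fix $F_W \in \F{}$ with $f(W) \subset F_W$, and I set $C(W) = p^{-1}(F_W) \in p^{-1}(\F{})$.

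It then remains to verify three points. (i) $C$ is a carrier: if $\bigcap_{W \in \mathcal{A}} W \neq \emptyset$ for some $\mathcal{A} \subset \W{}$, then this intersection lies in $f^{-1}\bigl(\bigcap_{W \in \mathcal{A}} F_W\bigr)$, so $\bigcap_{W \in \mathcal{A}} F_W$ contains a point $z$; since $p$ is surjective, any $y \in p^{-1}(z)$ belongs to every $C(W) = p^{-1}(F_W)$, whence $\bigcap_{W \in \mathcal{A}} C(W) \neq \emptyset$ (this is the only place surjectivity of $p$ is used). (ii) $g_0$ is carried by $C$: from $p \circ g_0 = f_{|A}$ we get $g_0(W \cap A) \subset p^{-1}(f(W)) \subset p^{-1}(F_W) = C(W)$ for each $W \in \W{}$. (iii) $p^{-1}(\F{})$ is an $AE(X)$-cover, which holds because it is an $AE(n)$-cover and $\dim X \leq n$. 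Granting these, the Carrier Theorem extends $g_0$ to a map $g \colon X \to Y$ carried by $C$ with $g_{|A} = g_0$, giving condition~(1); and for condition~(2), given $x \in X$ I pick $W \in \W{}$ with $x \in W$, so that $f(x) \in F_W$ and $p(g(x)) \in C(W) = p^{-1}(F_W)$, i.e. $p(g(x)) \in F_W$, showing that $f$ and $p \circ g$ are $\F{}$-close.

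The one genuinely delicate point is the open case: one must not feed $f^{-1}(\F{})$ directly into the Carrier Theorem, but first refine it to a closed, locally finite cover of controlled multiplicity — and this is precisely where the hypothesis $\dim X \leq n$ is consumed (it reappears, more cheaply, to promote the $AE(n)$-cover $p^{-1}(\F{})$ to an $AE(X)$-cover). The carrier condition is purely formal once surjectivity of $p$ has been invoked, and the $\F{}$-closeness of $f$ and $p \circ g$ is immediate from $g$ being carried by $C$; no estimates are needed.
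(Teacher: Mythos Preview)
Your proposal is correct and follows essentially the same route as the paper: build a closed, locally finite, locally finite-dimensional refinement $\W{}$ of $f^{-1}(\F{})$ (taking $\W{}=f^{-1}(\F{})$ in the closed case and refining in the open case), define a carrier into $p^{-1}(\F{})$, check that $g_0$ is carried using $p\circ g_0=f_{|A}$ and that the carrier condition holds by surjectivity of $p$, and apply the Carrier Theorem once. The paper phrases the carrier as a composition $C\circ D$ through $f^{-1}(\F{})$ rather than going directly, but this is cosmetic; if anything, you are slightly more explicit than the paper in invoking $\dim X\le n$ to obtain a refinement of bounded multiplicity in the open case.
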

\begin{proof}
  We have the following commutative diagram.
  \[
    \xymatrix@M=8pt@C=35pt{
    Y \ar[r]^{p} & Z  \\
    A \ar@{^{(}->}[r] \ar[u]_{g_0} & X \ar[u]^{f} \ar@{.>}[ul]_{g} \\
    }
  \]
  Let $\mathcal{P} = p^{-1}(\F{})$ and $\G{} = f^{-1}(\F{})$.
  Let $C \colon \G{} \to \mathcal{P}$ be defined by the formula $C(f^{-1}(F)) = p^{-1}(F)$. 
  Since $p$ is surjective, $C$ is a carrier. 
  For each $F \in \F{}$ and each $x \in f^{-1}(F) \cap A$ we have $g_0(x) \in p^{-1}(f(x)) \in p^{-1}(F)$; hence $g_0$ is carried by $C$.
  
  Let $\mathcal{H}$ be a closed locally finite locally finite-dimensional cover of $X$ such that $\mathcal{H}$ refines $\G{}$. 
  If $\F{}$ is an open cover, then $\mathcal{H}$ exists as a refinement of an open cover $\G{}$.
  If $\F{}$ is closed locally finite locally finite-dimensional, then we can take $\mathcal{H} = \G{}$, which has the required properties by Lemma~\ref{lem:pullback properties}.
  Let $D \colon \mathcal{H} \to \G{}$ be any map such that $H \subset D(H)$ for each $H \in \mathcal{H}$.
  
  Observe that $C \circ D \colon \mathcal{H} \to \mathcal{P}$ is a carrier, $g_0$ is carried by $C \circ D$, $\mathcal{P}$ is an $AE(n)$-cover, $X$ is at most $n$-dimensional metric space and $\mathcal{H}$ is closed, locally finite and locally finite-dimensional. By the Carrier Theorem, there exists a map $g \colon X \to Y$ that extends $g_0$ and that is carried by $C \circ D$.
  This implies that for each $x \in X$ there exists $F \in \F{}$ such that $x \in f^{-1}(F)$ and $g(x) \subset p^{-1}(F)$. 
  Therefore $p(g(x)) \in F$ so $p \circ g$ is $\F{}$-close to $f$.
\end{proof}

\subsection{A lifting condition for inverse limits}

\begin{numbered_definition}\label{def:limit}
Let
\[
 Z=\lim_{\longleftarrow}\left( Z_1\xleftarrow{p_1}Z_2\xleftarrow{p_2}\cdots \right).
\]
Let $\F{i}$ be a cover of $Z_i$.
We define the following conditions.
\begin{enumerate}[label=(\Alph*)]
\item For each $i$, $Z_i$ is a complete metric space.
\item For each $i$, the bonding map $p_i \colon Z_{i+1} \to Z_i$ is surjective and $1$-Lipschitz.
\item For each $i$, $\F{i}$ is either an open cover or a closed, locally finite, locally finite-dimensional cover.
\item For each $i$, the pull-back $p_i^{-1}(\F{i})$ is an $AE(n)$-cover.
\item $\sum_i \mesh \F{i} < \infty$.
\end{enumerate}

We denote short projections in the inverse limit by $\pi^k_i \colon Z_k \to Z_i (k > i)$ and long projections in the inverse limit by $\pi_k \colon Z \to Z_k$.
\end{numbered_definition}

\begin{lemma}\label{lem:limit lift}
  Let
\[
 Z=\lim_{\longleftarrow}\left( Z_1\xleftarrow{p_1}Z_2\xleftarrow{p_2}\cdots \right).
\]
  Let $\F{i}$ be a cover of $Z_i$. 
  Assume that conditions of Definition~\ref{def:limit} are satisfied.
  
  Let $X$ be an at most $n$-dimensional metric space.
  Let $f \colon X \to Z_1$.
  Let $A$ be a closed subset of $X$.
  Let $g_0 \colon A \to Z$ be a map such that $\pi_1 \circ g_0 = f_{|A}$ (a lift of $f$ on $A$).
  
  Then there exists a map $g \colon X \to Z$ such that $g_{|A} = g_0$ ($g$ extends $g_0$).
\end{lemma}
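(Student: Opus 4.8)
The plan is to build the lift $g\colon X\to Z$ coordinate by coordinate, producing a sequence of maps $g_k\colon X\to Z_k$ with $p_k\circ g_{k+1}=g_k$ (so that they assemble into a map into the inverse limit) and such that each $g_k$ extends $\pi_k\circ g_0$ on $A$. The single step of the induction is exactly Lemma~\ref{lem:single lift}: given $g_k$, apply that lemma to the bonding map $p_k\colon Z_{k+1}\to Z_k$, the cover $\F{k}$, the map $f=g_k\colon X\to Z_k$, the closed set $A$, and the partial lift $\pi_{k+1}\circ g_0\colon A\to Z_{k+1}$ (which lifts $g_{k|A}$ because $\pi_k\circ g_0=p_k\circ\pi_{k+1}\circ g_0$ and $g_{k|A}=\pi_k\circ g_0$ by the inductive hypothesis). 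Conditions (C) and (D) of Definition~\ref{def:limit} are precisely the hypotheses on $\F{k}$ and $p_k^{-1}(\F{k})$ needed by Lemma~\ref{lem:single lift}. This yields $g_{k+1}\colon X\to Z_{k+1}$ extending $\pi_{k+1}\circ g_0$, with $p_k\circ g_{k+1}$ only $\F{k}$-close to $g_k$ rather than equal to it.

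The obstacle — and the reason conditions (A), (B), (E) are present — is that the $g_k$ will not be honestly coherent: we only get $p_k\circ g_{k+1}$ is $\F{k}$-close to $g_k$. So the next step is to repair this. I would instead define the maps inductively together with a correction: after producing $g_{k+1}$ as above, one does not keep $g_{k+1}$ itself but replaces the lower-index maps, or better, one tracks the genuine composites $f_k^{(m)} := \pi^m_k\circ g_m\colon X\to Z_k$ for $m\ge k$ and shows that for fixed $k$ this sequence in $m$ is Cauchy in the sup-metric on maps $X\to Z_k$, uniformly. The key estimate: $d_{\sup}(\pi^m_k\circ g_m,\ \pi^{m+1}_k\circ g_{m+1})$ is controlled by $d_{\sup}(g_m,\ p_m\circ g_{m+1})\le \mesh\F{m}$ pushed forward through $\pi^m_k$, which by (B) is $1$-Lipschitz, so the distance is at most $\mesh\F{m}$. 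Summability (E) then gives that $(\pi^m_k\circ g_m)_m$ converges uniformly as $m\to\infty$ to some $\bar g_k\colon X\to Z_k$; completeness of $Z_k$ from (A) guarantees the limit is a well-defined continuous map. The telescoping identity $p_k\circ(\pi^{m}_{k+1}\circ g_{m}) = \pi^{m}_{k}\circ g_{m}$ passes to the limit, so $p_k\circ\bar g_{k+1}=\bar g_k$, hence the $\bar g_k$ assemble into a map $g\colon X\to Z$.

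Finally I would check the boundary condition. Each $g_m$ restricted to $A$ equals $\pi_m\circ g_0$, so $\pi^m_k\circ g_{m|A}=\pi^m_k\circ\pi_m\circ g_0=\pi_k\circ g_0$, independent of $m$; therefore the uniform limit satisfies $\bar g_{k|A}=\pi_k\circ g_0$, which means exactly that $g_{|A}=g_0$. I would also record at the outset the base case $g_1 := $ a map obtained from one application of Lemma~\ref{lem:single lift} to $p_1$ with $f\colon X\to Z_1$ itself — or more simply just take $\bar g_1$ as the limit and note $\pi_1\circ g = \bar g_1$, and that $\bar g_{1|A}=\pi_1\circ g_0 = f_{|A}$, so the lift we produce genuinely extends $g_0$. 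The only genuinely delicate point is the uniform-Cauchy argument, and that is where summability of meshes together with the $1$-Lipschitz bonding maps does all the work; everything else is bookkeeping with the inverse-limit identities.
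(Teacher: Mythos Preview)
Your proposal is correct and follows essentially the same approach as the paper: construct approximate lifts $g_k\colon X\to Z_k$ inductively via Lemma~\ref{lem:single lift}, then take uniform limits of the projections $\pi^m_k\circ g_m$ using conditions (A), (B), (E) to obtain coherent maps $\bar g_k$ that assemble into the desired $g$. The only minor wobble is your description of the base case; the cleanest choice (and the one the paper uses) is simply $g_1:=f$, after which the induction runs exactly as you describe.
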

\begin{proof}
Let $f_1 = f$. 
Applying Lemma~\ref{lem:single lift} for each $i>1$ we construct a map
  $f_i \colon X \to Z_i$ that satisfies the following conditions.
\begin{enumerate}[label=(\roman*)]
  \item $p_{i-1} \circ f_{i}$ is $\F{i-1}$-close to $f_{i-1}$.
  \item $\pi_i \circ g_0 = f_i\mid_A$.
\end{enumerate}
For each $k$ and each $m > k$ we let 
\[
a_m^k=\pi^m_k\circ f_m: X \to Z_k. 
\]
By (i), we have $\dsup(f_m, p_m \circ f_{m+1}) \leq \mesh \F{m}$.
By (B) the short projection $\pi^m_k$ is $1$-Lipschitz, hence $\dsup(a^k_m, a^k_{m+1})
  = \dsup(\pi^m_k \circ f_m, \pi^m_k \circ p_m \circ f_{m+1}) \leq \mesh \F{m}$.
Therefore for $l > m$ we have
\[
  \dsup (a^k_l, a^k_m) \leq \dsup(a^k_l, a^k_{l-1}) + \dsup(a^k_{l-1}, a^k_{l-2})
    + \cdots + \dsup(a^k_{m+1}, a^k_m) \leq 
\]
\[
  \leq \mesh \F{l-1} + \mesh \F{l-2} + \cdots + \mesh \F{m+1} + \mesh \F{m}.
\]
By (E), the sequence $a_m^k$ is uniformly convergent. Let
\[
  a^k = \lim_{m \to \infty} a_m^k.
\]
By (A), $Z_k$ is complete, hence $a^k \colon X \to Z_k$ is well defined.

It follows from the definition that $p_k \circ a^{k+1}_m = a^k_m$. Passing to the limit we have 
\[
  p_k \circ a^{k+1} = \lim_{m \to \infty} p_k \circ a^{k+1}_m = \lim_{m \to\infty} a^k_m = a^k.
\]
Therefore, for each $x$ the sequence $(a^k(x))_k$ is a thread in $Z$ and
we can define a map $g \colon X \to Z$ by the formula
\[
  (g(x))_k = a^k(x).
\]

Observe that by (ii) and by the definition of $a^k_m$ we have
\[
a^k_m |_A = \pi^m_k \circ f_m |_A = \pi^m_k \circ \pi_m \circ g_0  |_A = \pi_k \circ g_0 |_A.
\]
Therefore $a^k\mid_A = \pi_k \circ g_0$ for each $k$, hence $g |_A = g_0 |_A$.
\end{proof}

\begin{theorem} \label{thm:ane condition} 
  Let
\[
 Z=\lim_{\longleftarrow}\left( Z_1\xleftarrow{p_1}Z_2\xleftarrow{p_2}\cdots \right).
\]
  Let $\F{i}$ be a cover of $Z_i$. 
  If the conditions of Definition~\ref{def:limit} are satisfied and $Z_1$ is an $ANE(n)$, then $Z$ is an $ANE(n)$.
\end{theorem}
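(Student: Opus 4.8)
\emph{Proposed proof.}

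The plan is to unwind the definition of $ANE(n)$ and reduce the whole statement to a single application of Lemma~\ref{lem:limit lift}. First observe that $Z$, being a subspace of the countable product $\prod_i Z_i$ of metric spaces, is metrizable, so it makes sense to ask whether $Z \in ANE(n)$. Fix an at most $n$-dimensional metric space $Y$, a closed subset $B \subset Y$, and a map $h \colon B \to Z$; we must extend $h$ over an open neighborhood of $B$ in $Y$.

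The idea is to carry out the extension in two stages: first on the bottom space $Z_1$, using the hypothesis $Z_1 \in ANE(n)$, and then lift the result up the tower. Concretely, consider the composition $\pi_1 \circ h \colon B \to Z_1$. Since $Z_1 \in ANE(n)$ and $B$ is a closed subset of the at most $n$-dimensional metric space $Y$, there is an open neighborhood $U$ of $B$ in $Y$ and a map $f \colon U \to Z_1$ with $f|_B = \pi_1 \circ h$.

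Now $U$ is an open subset of $Y$, hence a metric space of dimension at most $n$ (dimension is monotone on subspaces of metric spaces), and $B$ is closed in $U$ because it is closed in $Y$. We therefore have exactly the data required by Lemma~\ref{lem:limit lift}: the inverse system $Z = \lim_{\longleftarrow}\big(Z_1 \xleftarrow{p_1} Z_2 \xleftarrow{p_2} \cdots\big)$ together with the covers $\F{i}$ satisfying the conditions of Definition~\ref{def:limit} (these are precisely the hypotheses of the theorem), the at most $n$-dimensional metric space $X = U$, the map $f \colon U \to Z_1$, the closed subset $A = B$, and the lift $g_0 = h \colon B \to Z$ of $f|_B$ (indeed $\pi_1 \circ h = f|_B$ by construction). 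Lemma~\ref{lem:limit lift} then produces a map $g \colon U \to Z$ with $g|_B = h$, which is the desired extension of $h$ over the open neighborhood $U$ of $B$. Hence $Z \in ANE(n)$.

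Since all the substantive work --- the inductive construction of the approximate lifts, the convergence estimate relying on $\sum_i \mesh \F{i} < \infty$, and the assembly of the thread --- is already packaged inside Lemma~\ref{lem:limit lift}, no real obstacle remains. The only points demanding care are bookkeeping: performing the base-space extension \emph{before} the lift, so that the hypothesis $Z_1 \in ANE(n)$ is actually brought to bear; checking that the neighborhood $U$ inherits the metric structure and the dimension bound needed to feed it into Lemma~\ref{lem:limit lift}; and noting that $B$ remains closed in $U$.
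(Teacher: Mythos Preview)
Your proof is correct and follows exactly the same approach as the paper's own proof: first extend $\pi_1 \circ h$ over an open neighborhood $U$ using $Z_1 \in ANE(n)$, then invoke Lemma~\ref{lem:limit lift} to lift this extension to $Z$. The extra remarks you include (metrizability of $Z$, the dimension bound on $U$, closedness of $B$ in $U$) are harmless bookkeeping that the paper omits.
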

\begin{proof}
  We will show that $Z$ is an $ANE(n)$ directly from the definition.
  Let $X$ be an at most $n$-dimensional metric space and let $A\subset X$ be a closed subset. Take $g_0\colon A\to Z$. By assumption, $Z_1$ is an $ANE(n)$ so we can extend $\pi_1\circ g_0:A\to Z_1$ to a map $f_1:U\to Z_1$, where $U$ is an open neighborhood of $A$ in $X$. 
  By Lemma~\ref{lem:limit lift}, $f_1$ can be lifted to a map $g \colon U \to Z$ such that $g_{|A} = g_0$. This is the extension of $f$ we sought.
\end{proof}

\section{Local $k$-connectedness of inverse limits of polyhedra}

\begin{definition}
  Let
  \begin{equation}\label{eq:sequence}\tag{L}
    X = \lim_{\longleftarrow}\left( K_1 \xleftarrow{p_1} K_2 \xleftarrow{p_2} \cdots.\right)
  \end{equation}
  Fix $m$ and let $A \subset K_m$ be a subcomplex of $K_m$.
  A \df{restriction of \eqref{eq:sequence} to $A$} is the inverse limit
  \begin{equation}\label{eq:restriction}\tag{R}
    X' = \lim_{\longleftarrow} \left(K'_m \xleftarrow{p'_m} K'_{m+1} \xleftarrow{p'_{m+1}} \cdots\right),
  \end{equation}
  where $K'_m =\ A$ and for each $j \geq m$, $K'_{j+1} = p^{-1}_j(K'_j)$ and $p'_j = p_j \mid_{K'_{j+1}}$.
\end{definition}
\begin{lemma}\label{lem:restriction}
  Let \eqref{eq:restriction} be the restriction of~\eqref{eq:sequence} to a subcomplex $A \subset K_m$.
  
  If, for each $i$, \eqref{eq:sequence} satisfies the conditions:
  \begin{enumerate}[label=(\Roman*)]
  \item $K_i$ is a polyhedron and
  \item $p_i$ is a quasi-simplicial map that is surjective and $n$-regular,
  \end{enumerate}
  then so does \eqref{eq:restriction}.
  The inverse limit $X'$ is homeomorphic to $\pi_m^{-1}(A)$, where $\pi_m \colon X \to X_m$ denotes the long projection.
\end{lemma}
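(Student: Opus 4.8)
The plan is to verify the two numbered conditions for the sequence \eqref{eq:restriction} and then establish the homeomorphism $X' \cong \pi_m^{-1}(A)$.

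\medskip

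\noindent\textbf{Verifying (I).} First I would observe that since $A$ is a subcomplex of the polyhedron $K_m$, it is itself a polyhedron, and it is locally finite-dimensional because $K_m$ is. For the higher terms, I proceed by induction: assuming $K'_j$ is a subcomplex of $K_j$, I claim $K'_{j+1} = p_j^{-1}(K'_j)$ is a subcomplex of $K_{j+1}$. Here is the key point: $p_j$ is quasi-simplicial, i.e.\ simplicial as a map $K_{j+1} \to \beta K_j$, so the preimage of a subcomplex of $\beta K_j$ is a subcomplex of $K_{j+1}$; since $K'_j$ is a subcomplex of $K_j$ it is also a subcomplex of $\beta K_j$, so $p_j^{-1}(K'_j)$ is indeed a subcomplex. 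Local finite-dimensionality of $K'_{j+1}$ follows from that of $K_{j+1}$. I should also check $p'_j$ is a genuine bonding map between these polyhedra with the metric topology, which is immediate since it is a restriction of $p_j$ to a subcomplex with the subspace metric.

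\medskip

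\noindent\textbf{Verifying (II).} The map $p'_j = p_j|_{K'_{j+1}}$ is quasi-simplicial because the restriction of a simplicial map to a subcomplex of the domain, corestricted to a subcomplex of the range, is simplicial; here $p'_j$ maps $K'_{j+1}$ into $\beta K'_j = (\beta K_j) \cap K'_j$ simplicially. It is surjective by construction: every point of $K'_j \subset K_j$ has a $p_j$-preimage in $K_{j+1}$, and that preimage lies in $p_j^{-1}(K'_j) = K'_{j+1}$. For $n$-regularity I must check that for each simplex $\delta$ of $\beta K'_j$, the set $(p'_j)^{-1}(\delta)$ has vanishing homotopy groups below dimension $n$. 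But $\delta$ is also a simplex of $\beta K_j$, and $(p'_j)^{-1}(\delta) = p_j^{-1}(\delta) \cap K'_{j+1} = p_j^{-1}(\delta) \cap p_j^{-1}(K'_j) = p_j^{-1}(\delta)$, since $\delta \subset K'_j$. So the fiber is literally the same set as for the original map, whose homotopy groups vanish by the $n$-regularity of $p_j$. This is the cleanest part.

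\medskip

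\noindent\textbf{The homeomorphism.} A thread in $X'$ is a sequence $(x_m, x_{m+1}, \ldots)$ with $x_m \in A$, $x_j \in K'_j$, and $p_j(x_{j+1}) = x_j$. Given such a thread, the compatibility $p_j(x_{j+1}) = x_j$ lets me propagate backwards using surjectivity of $p_1, \ldots, p_{m-1}$ to build a full thread in $X$ whose $m$-th coordinate is $x_m \in A$ --- but I want a \emph{canonical} bijection, so the cleaner route is: a thread in $\pi_m^{-1}(A)$ is an element $(y_1, y_2, \ldots) \in X$ with $y_m \in A$; then automatically $y_j \in K'_j$ for all $j \geq m$ by downward induction ($y_m \in A = K'_m$, and $p_j(y_{j+1}) = y_j \in K'_j$ forces $y_{j+1} \in p_j^{-1}(K'_j) = K'_{j+1}$), and the tail $(y_m, y_{m+1}, \ldots)$ is a thread in $X'$; conversely a thread in $X'$ extends \emph{uniquely} to an element of $\pi_m^{-1}(A)$ because the coordinates $y_j$ for $j < m$ are determined --- no they are not determined, so I must instead argue $X'$ maps to $\pi_m^{-1}(A)$ only up to the choice of those coordinates. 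The correct formulation: the map $\pi_m^{-1}(A) \to X'$ sending $(y_j)_j \mapsto (y_j)_{j \geq m}$ is a continuous bijection (injectivity because threads in $X$ over a fixed $K_1$-coordinate path are determined, but actually injectivity here is subtle). I would instead prove the homeomorphism by noting $\pi_m^{-1}(A)$ is canonically the inverse limit of the sequence $K_1 \leftarrow \cdots \leftarrow K_{m-1} \leftarrow A \leftarrow K'_{m+1} \leftarrow \cdots$ (replacing $K_m$ by $A$ and restricting $p_{m-1}$), and truncating an inverse sequence at any finite stage where all the dropped bonding maps are surjective yields a homeomorphic limit; this last fact is standard. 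Continuity in both directions is automatic from continuity of projections and the universal property, and properness/compactness is not available so I rely on the explicit inverse map being continuous coordinatewise. \textbf{The main obstacle} I expect is precisely this last point: making the identification $X' \cong \pi_m^{-1}(A)$ rigorous, since truncating the inverse sequence changes the limit set-theoretically and one must invoke surjectivity of $p_1, \ldots, p_{m-1}$ together with the fact that the forgetful map on threads is a homeomorphism onto its image $\pi_m^{-1}(A)$ with the subspace topology from $X$.
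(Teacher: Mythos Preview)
Your proposal is correct and follows essentially the same route as the paper. Your verifications of (I) and (II) are considerably more detailed than the paper's, which dismisses them as ``trivial to verify''; your argument that $(p'_j)^{-1}(\delta) = p_j^{-1}(\delta)$ for $\delta \in \tau(\beta K'_j)$ is exactly the point.

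For the homeomorphism, the paper does precisely what you eventually propose: it expresses $\pi_m^{-1}(A)$ as an inverse limit indexed from $1$ (with $K'_j = p_j(K'_{j+1})$ for $j<m$) and then observes that truncating the first $m-1$ terms changes the metric on the limit but not the topology. Your hesitation here is unwarranted: truncating \emph{any} inverse sequence at a finite stage always yields a homeomorphic limit, with no surjectivity hypothesis on the dropped bonding maps, because the coordinates $y_j$ for $j<m$ are uniquely determined by $y_m$ via $y_{m-1}=p_{m-1}(y_m)$, $y_{m-2}=p_{m-2}(y_{m-1})$, and so on; the inverse of the forgetful map is continuous since each lower coordinate is a continuous function of $y_m$. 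So the ``main obstacle'' you flag is not an obstacle at all, and the step you thought subtle is in fact routine.
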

\begin{proof}
  We have
  \[
    \pi^{-1}_m(A) = \lim_{\longleftarrow} \left(K'_1 \xleftarrow{p'_1} K'_{2} \xleftarrow{p'_{2}} \cdots\right),
  \]
  where $K'_j = p_j(K'_{j+1})$ and $p'_j = p_j \mid_{K'_{j+1}}$ for $j < m$.
  The restriction \eqref{eq:restriction} is the same sequence with first $m-1$ elements removed. This changes the metric on the limit, but not the topology, hence $X'$ is homeomorphic to $\pi_m^{-1}(A)$.
  
  The other conditions are trivial to verify.
\end{proof}

\begin{theorem}\label{thm:ane limit of polyhedra}
  Let
  \[
    X = \lim_{\longleftarrow} \left(K_1 \xleftarrow{p_1} K_2 \xleftarrow{p_2} \cdots\right).
  \]
  Assume that for each $i$ the following conditions are satisfied:
  \begin{enumerate}[label=(\Roman*)]
  \item $K_i$ is a locally finite-dimensional polyhedron; and
  \item $p_i$ is a quasi-simplicial map that is surjective and $n$-regular.
  \end{enumerate}
  Then 
  \begin{enumerate}
  \item $X$ is an $ANE(n)$;
  \item each short projection $\pi^k_i \colon K_k \to K_i$ and each long projection $\pi_i \colon X \to X_i$ is a weak $n$-homotopy equivalence;
  \item for each $i$, the covers $\O{i}$ and $\B{i}$ are $AE(n)$-covers of $X$.
  \end{enumerate}  
\end{theorem}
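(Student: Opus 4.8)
The plan is to verify the hypotheses of Theorem~\ref{thm:ane condition} and Theorem~\ref{thm:nerve theorem} for the inverse system $\left(K_i, p_i\right)$, after rescaling the metrics on the $K_i$ so that the mesh-summability condition (E) holds. First I would fix scales $\kappa_i > 0$ on each $K_i$ with $\sum_i \kappa_i < \infty$; by Lemma~\ref{lem:qs is lipschitz} a quasi-simplicial map between polyhedra of scales $\kappa$ and $\lambda$ is $\frac{\lambda}{2\kappa}$-Lipschitz, so choosing $\kappa_i$ decreasing fast enough makes every $p_i$ $1$-Lipschitz, giving condition (B); surjectivity of $p_i$ is hypothesis (II). Condition (A) holds because a locally finite-dimensional polyhedron is a complete metric space by Lemma~\ref{lem:complex is ane}. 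For the covers I would take $\F{i} = \B{K_i}$, the cover of $K_i$ by barycentric stars of vertices; by Lemma~\ref{lem:stars} each $\B{K_i}$ is a closed, locally finite, locally finite-dimensional cover (local finite-dimensionality uses hypothesis (I)), so (C) holds, and $\mesh \B{K_i} \to 0$ at a rate controlled by $\kappa_i$, so shrinking the $\kappa_i$ further secures (E). The crucial input is (D): the pull-back $p_i^{-1}(\B{K_i})$ must be an $AE(n)$-cover of $K_{i+1}$ — but this is exactly the content of Theorem~\ref{thm:nregular pullback}, since $p_i$ is quasi-simplicial, surjective, and $n$-regular. Finally $K_1$ is an $ANE(n)$ by Lemma~\ref{lem:complex is ane} (it is even $ANE(\infty)$), so Theorem~\ref{thm:ane condition} applies and yields conclusion~(1), that $X$ is an $ANE(n)$.

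For conclusion~(2), the short projections: each $\pi^k_i = p_i \circ p_{i+1} \circ \cdots \circ p_{k-1}$ is a composition of finitely many maps, each of which is a weak $n$-homotopy equivalence by Theorem~\ref{thm:nregular pullback}, and a composition of weak $n$-homotopy equivalences is a weak $n$-homotopy equivalence, so each $\pi^k_i$ is one. For the long projection $\pi_i \colon X \to K_i$ I would apply the Nerve Theorem machinery directly: by Lemma~\ref{lem:pullback properties}(1), since each $\pi_i$ is surjective (the $p_j$ are surjective and the $K_j$ are complete, so threads exist over every point), the cover $\B{i} = \pi_i^{-1}(\B{K_i})$ is isomorphic to $\B{K_i}$; one must then check $\B{i}$ is an $AE(n)$-cover of $X$. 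An element $\bigcap_{A \in \mathcal{A}} \pi_i^{-1}(\bst_{K_i} v_A)$ equals $\pi_i^{-1}\left(\bigcap_{A} \bst_{K_i} v_A\right)$, and the restricted inverse system over this intersection-subcomplex of $K_i$ again satisfies hypotheses (I)–(II) by Lemma~\ref{lem:restriction}; since $\bigcap_A \bst_{K_i} v_A$ is an $ANE(\infty)$ and is $k$-connected for $k<n$ by Lemma~\ref{lem:stars} (it is $AE(\infty)$), part~(1) applied to the restricted system shows the corresponding piece of $X$ is $ANE(n)$, and a weak-homotopy-equivalence argument (via Theorem~\ref{thm:nerve theorem} applied to the restricted system's long projection, or directly) shows it is $k$-connected for $k<n$, hence $AE(n)$ by Theorem~\ref{thm:dugundji}. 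This simultaneously gives conclusion~(3) for $\B{i}$, and then Corollary~\ref{cor:open star swelling}-type reasoning, or the observation that $\O{i}$ is the pull-back of the open cover $\O{K_i}$ which by Lemma~\ref{lem:deformation} deformation-retracts onto the same subcomplexes, transfers the $AE(n)$-cover property to $\O{i}$.

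With conclusion~(3) in hand, the weak $n$-homotopy equivalence of $\pi_i$ follows: $X$ is an $ANE(n)$, $K_i$ is a polyhedron, $\pi_i$ is surjective, and $\B{i} = \pi_i^{-1}(\B{K_i})$ and $\B{K_i}$ are isomorphic $AE(n)$-covers (of $X$ and $K_i$ respectively), so Theorem~\ref{thm:nerve theorem} — applied with $p = \pi_i$, $\mathcal{K} = \B{i}$, $\mathcal{L} = \B{K_i}$ — gives that $\pi_i$ is a weak $n$-homotopy equivalence. (Technically Theorem~\ref{thm:nerve theorem} is stated for simplicial maps of polyhedra, so I would instead invoke the underlying Carrier-Theorem argument directly, or appeal to the general Nerve Theorem of~\cite{nagorko2007} which applies to the metric space $X$ with the closed locally finite locally finite-dimensional $AE(n)$-cover $\B{i}$.)

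The main obstacle I expect is conclusion~(3), establishing that $\B{i}$ (and $\O{i}$) are genuinely $AE(n)$-covers of the inverse limit $X$ rather than merely of the finite-level polyhedra: one must show that every nonempty finite intersection of elements of $\B{i}$ is an $AE(n)$-space, and this requires running the whole inverse-limit argument (part~(1)) on the restricted subsystems furnished by Lemma~\ref{lem:restriction}, together with verifying $k$-connectedness for $k<n$ of those restricted limits — and $ANE(n)$ plus $k$-connectedness for $k<n$ gives $AE(n)$ only via Theorem~\ref{thm:dugundji}, so the $k$-connectedness must be extracted, most naturally from the weak $n$-homotopy equivalence of the long projection onto the ($k$-connected) base subcomplex. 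Organizing this induction-free but mutually-referential bundle of statements so that part~(1), the weak-equivalence claims, and the $AE(n)$-cover claims are proved in the right order is the delicate bookkeeping; the conceptual content is entirely supplied by the earlier results.
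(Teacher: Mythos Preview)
Your treatment of conclusion~(1) and of the short projections in~(2) matches the paper's proof exactly: fix scales $2^{-i}$, verify (A)--(E) of Definition~\ref{def:limit} using Lemmas~\ref{lem:complex is ane}, \ref{lem:qs is lipschitz}, \ref{lem:stars} and Theorem~\ref{thm:nregular pullback}, then apply Theorem~\ref{thm:ane condition}.

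The gap is precisely where you flag it: the circular dependence between ``$\pi_i$ is a weak $n$-homotopy equivalence'' and ``$\B{i}$ is an $AE(n)$-cover''. You propose to resolve it via Theorem~\ref{thm:nerve theorem} or the general Nerve Theorem of~\cite{nagorko2007}, but neither applies: Theorem~\ref{thm:nerve theorem} is stated only for simplicial maps between polyhedra and $X$ is not a polyhedron, while the general Nerve Theorem still presupposes that $\B{i}$ is an $AE(n)$-cover, which is exactly what is at stake. The paper breaks the circularity differently. It proves the \emph{monomorphism} half of~(2) for the long projection first, directly from Lemma~\ref{lem:limit lift}: given $\varphi\colon S^m\to X$ with $\pi_i\circ\varphi$ null-homotopic via $\varPhi\colon B^{m+1}\to K_i$, one lifts $\varPhi$ through the inverse system to a null-homotopy $\tilde\varPhi\colon B^{m+1}\to X$ of $\varphi$. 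This uses only conditions (A)--(E), and in particular condition~(D) asks only that $p_i^{-1}(\B{K_i})$ be an $AE(n)$-cover of $K_{i+1}$ --- a polyhedral statement supplied by Theorem~\ref{thm:nregular pullback} --- not that $\B{i}$ be an $AE(n)$-cover of $X$. With (mono) in hand, the argument you sketch for~(3) goes through: restrict via Lemma~\ref{lem:restriction}, apply part~(1) to get $ANE(n)$, and use (mono) on the restricted system to transfer $k$-connectedness from the base subcomplex $A$ (which has vanishing $\pi_{<n}$ by Theorem~\ref{thm:nregular pullback}) to $\bigcap\mathcal{A}$; Theorem~\ref{thm:dugundji} then gives $AE(n)$. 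Only after $\B{i}$ and $\O{i}$ are known to be $AE(n)$-covers does the paper prove (epi), via Lemma~\ref{lem:single lift} and Lemma~\ref{lem:uclose uhomotopic}.

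A secondary point: your proposal to obtain the $AE(n)$-cover property for $\O{i}$ by transporting the deformation retractions of Lemma~\ref{lem:deformation} does not work as stated, since those retractions live in $K_i$ and do not lift to $X$. The paper instead invokes Lemma~\ref{lem:union is ae}, observing that open stars (and their intersections) are increasing unions of iterated barycentric stars, whose $\pi_i$-preimages are $AE(n)$ by the argument just given for $\B{i}$ applied at successive barycentric subdivisions.
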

\begin{proof}
  Fix a %geodesic 
  metric of scale $2^{-i}$ on $K_i$ (see Definition~\ref{def:polyhedron metric}). 
  Let $(*)$ denote the inverse limit
  \[
    X = \lim_{\longleftarrow} \left( K_1 \xleftarrow{p_1} K_2 \xleftarrow{p_2} \cdots\right)
  \]
  along with a sequence $\B{K_i}$ of covers, where $\B{K_i}$ is a cover of $K_i$ by barycentric stars of vertices.
  
  We will verify that $(*)$ satisfies conditions (A)--(E) of Definition~\ref{def:limit}.
  
  Condition (A): By Lemma~\ref{lem:complex is ane}, every $K_i$ is a complete metric space.
  
  Condition (B): By condition (II), $p_i$ is surjective. By the choice of scale on $K_i$ and Lemma~\ref{lem:qs is lipschitz}, each bonding map is $1$-Lipschitz.
  
  Condition (C): By Lemma~\ref{lem:stars}, each $\B{K_i}$ is a closed, locally finite, locally finite-dimensional cover. 
  
  Condition (D): By assumption $p_i$ is quasi-simplicial and $n$-regular. Hence by Theorem~\ref{thm:nregular pullback}, the pull-back $p_i^{-1}(\B{K_i})$ is an $AE(n)$-cover.
  
  Condition (E): By the choice of the metric on $K_i$, we have $\sum_i \mesh \B{K_i} \leq \sum_i \diam K_i < \infty$.
  
  To prove (1) it is enough to verify the conditions of Theorem~\ref{thm:ane condition} for $(*)$.
  We just verified conditions of Definition~\ref{def:limit}.
  By Lemma~\ref{lem:complex is ane}, $K_1$ is an $ANE(n)$.
  We are done.
  
  By Theorem~\ref{thm:nregular pullback} each $p_i$ is a weak $n$-homotopy equivalence and therefore all the short projections $\pi^k_i$ are weak $n$-homotopy equivalences. To finish the proof of (2), we must show that 
  \begin{itemize}
      \item[(mono)] for fixed $i>0$ and $m<n$, the long projection $\pi_i:X\to K_i$ induces a monomorphism on the homotopy group of dimension $m$, regardless of the choice of base point; and
      \item[(epi)] for fixed $i>0$ and $m<n$, the long projection $\pi_i:X\to K_i$ induces an epimorphism on the homotopy group of dimension $m$, regardless of the choice of base point.
  \end{itemize}

  We prove (mono). Fix $i > 0$ and $m < n$. %We will show that the long projection $\pi_i \colon X \to K_i$ induces a monomorphism on the homotopy group of dimension $m$, regardless of the choice of basepoint.
  Let $\varphi \colon S^m \to X$.
  Assume that $\pi_i \circ \varphi$ is null-homotopic in $K_i$.
  Let $\varPhi \colon B^{m+1} \to K_i$ denote the null-homotopy ($B^{m+1}$ denotes the $m+1$-dimensional unit ball in $\mathbb{R}^{m+1}$).
  We have the following commutative diagram:
  \[
    \xymatrix@M=8pt@C=35pt{
    X \ar[r]^{\pi_i} & K_i  \\
    S^m \ar@{^{(}->}[r] \ar[u]_{\varphi} & B^{m+1} \ar[u]^{\varPhi} \ar@{.>}[ul]_{\tilde \varPhi} \\
    }
  \]
  The inverse limit
  \[
    X = \lim_{\longleftarrow} \left(K_i \xleftarrow{p_i} K_{i+1} \xleftarrow{p_{i+1}} \cdots \right)
  \]
  along with the sequence of covers by barycentric stars of vertices (which we obtain by truncating (*)) satisfies the conditions of Definition~\ref{def:limit}.
  Since $B^{m+1}$ is at most $n$-dimensional, by Lemma~\ref{lem:limit lift}, there exists a lift $\tilde \varPhi$ such that the diagram is commutative. 
  This lift is a null-homotopy of $\varphi$ in $X$, hence $\pi_i$ induces a monomorphism on the homotopy group of dimension $m$.
  We will show (epi) %that it induces an epimorphism 
  at the end of the proof.
  
  Next we prove (3). To begin, we will show that for each $i$, $\B{i}$ is an $AE(n)$-cover.
  Let $\mathcal{A}$ be a collection of elements of $\B{i}$ such that the intersection $\bigcap \mathcal{A}$ is non-empty.
  By the definition of $\B{i}$, we have $\mathcal{A} = \{ \pi_i^{-1}(\bst_{K_i} v) \}_{v \in V}$ for some set of vertices $V$ of $K_i$. 
  Let $A = \bigcap_{v \in V} p_i^{-1}(\bst_{K_i} v)$.
  Since $p_i$ is quasi-simplicial, $A$ is a subcomplex of $K_{i+1}$.
  By Theorem~\ref{thm:nregular pullback}, $A$ has vanishing homotopy groups in dimensions less than $n$.
  Let
  \begin{equation}\tag{**}
    X' = \lim_{\longleftarrow} \left( K'_{i+1} \xleftarrow{p'_{i+1}} K'_{i+2} \xleftarrow{p'_{i+2}} \cdots\right),
  \end{equation}
  where $K'_{i+1} =\ A$ and for each $j \geq i+1$, $K'_{j+1} = p^{-1}_j(K'_j)$ and $p'_j = p_j \mid_{K'_{j+1}}$, be the restriction of (*) to $A$.
  By Lemma~\ref{lem:restriction} it satisfies assumptions (I) and (II) and is homeomorphic to $\bigcap \mathcal{A}$. Hence from what we have already proven, $X' = \bigcap \mathcal{A}$ is an $ANE(n)$ and the long projection $\pi_{i+1} \colon \bigcap \mathcal{A} \to A$ induces monomorphisms on homotopy groups of dimensions less than $n$. Since $A$ has vanishing homotopy groups in these dimensions, so does $\bigcap \mathcal{A}$. By Theorem~\ref{thm:dugundji}, $\bigcap \mathcal{A}$ is an $AE(n)$ hence $\B{i}$ is an $AE(n)$-cover. 
  
  It follows from Lemma~\ref{lem:union is ae} that $\O{i}$ is an $AE(n)$-cover, as open stars are (infinite) unions of iterated barycentric stars.
  
%  We will show that for each $i$, $\O{i}$ is an $AE(n)$-cover.
%  Let $M$ be a polyhedron.
%  Let $A$ be a subpolyhedron of $M$.
%  Let $\bst^1 A = \bst_M A$ and $\bst^m A = $
%  Let $J \subset V(K_i)$.
%  Let $K$ be a subcomplex of $\beta K_i$.
%  We will show that $\bigcap_{j \in J} p^{-1}_i(\ost_{K_i} j)$ is $AE(n)$.
%  Let $B_1 = \bigcap_{j \in J} \bst_{K_i} j$.
%  Let $B_{m+1} = p^{-1}(\bst p^{-1}(B_m))$
%  Lemma~\ref{lem:union is ae}
  
  Finally, we prove (epi). 
  Fix $i > 0$ and $m < n$. %We will show that the long projection $\pi_i \colon X \to K_i$ induces an epimorphism on the homotopy group of dimension $m$, regardless of the choice of basepoint. 
  Let $\varphi \colon S^m \to K_i$.
  Let $\mathcal{O}$ be a cover of $K_i$ by open stars of vertices.
  We have just shown that $\pi_i^{-1}(\mathcal{O})$ is an $AE(n)$-cover.
  Hence by Lemma~\ref{lem:single lift}, there exists a map $\psi \colon S^m \to X$ such that $\pi_i \circ \psi$ and $\varphi$ are $\mathcal{O}$-close.
  By Lemma~\ref{lem:uclose uhomotopic}, these maps are homotopic.
  This shows that $\pi_i$ induces epimorphisms on homotopy groups of dimensions less than $n$.
  Hence $\pi_i$ is a weak $n$-homotopy equivalence.
\end{proof}

\begin{corollary}
  Let
  \[
    X = \lim_{\longleftarrow} \left(K_1 \xleftarrow{p_1} K_2 \xleftarrow{p_2} \cdots\right).
  \]
  Assume that for each $i$, (I) $K_i$ is a locally finite-dimensional polyhedron; (II) $p_i$ is a quasi-simplicial map that is surjective and $n$-regular.
  Let $Y$ be a metric space of dimension less than $n$ and let $f, g \colon Y \to X$.
  Then if $f$ is $\O{i}$-close to $g$, then $f$ is $\O{i}$-homotopic to $g$.
\end{corollary}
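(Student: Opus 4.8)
The plan is to deduce this corollary directly from what has already been established in Theorem~\ref{thm:ane limit of polyhedra}, so almost no new work is required. The key fact proven there is that under hypotheses (I) and (II), the cover $\O{i}$ of $X$ is an $AE(n)$-cover of $X$; moreover $\O{i}$ is an open cover, being the pull-back under the long projection $\pi_i$ of the open cover $\O{K_i}$ of $K_i$ by open stars of vertices. (That $\O{K_i}$ is open is Lemma~\ref{lem:stars}, and pull-backs of open covers are open by Lemma~\ref{lem:pullback properties}.)

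With those two observations in hand, the corollary is an immediate instance of Lemma~\ref{lem:uclose uhomotopic}: $\O{i}$ is an open $AE(n)$-cover of the space $X$, and $f,g \colon Y \to X$ are two $\O{i}$-close maps from a metric space $Y$ of dimension less than $n$ into $X$, so Lemma~\ref{lem:uclose uhomotopic} yields that $f$ and $g$ are $\O{i}$-homotopic. I would write the proof essentially as those three sentences: invoke Theorem~\ref{thm:ane limit of polyhedra}(3) to get the $AE(n)$-cover property, note openness, then cite Lemma~\ref{lem:uclose uhomotopic}.

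There is no real obstacle here; the only point that deserves a word of care is the dimension hypothesis on the domain, which matches exactly the hypothesis of Lemma~\ref{lem:uclose uhomotopic} (metric space of dimension less than $n$), so the application is clean. One might alternatively phrase it through the Corollary following the Carrier Theorem, but routing through Lemma~\ref{lem:uclose uhomotopic} is the most direct since it is stated precisely for open $AE(n)$-covers and does not require the local finiteness hypotheses that the Carrier-Theorem corollary carries.

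\begin{proof}
  By Theorem~\ref{thm:ane limit of polyhedra}(3), the cover $\O{i}$ is an $AE(n)$-cover of $X$. Since $\O{K_i}$ is an open cover of $K_i$ (Lemma~\ref{lem:stars}) and $\O{i} = \pi_i^{-1}(\O{K_i})$ is its pull-back under the long projection, $\O{i}$ is an open cover of $X$ by Lemma~\ref{lem:pullback properties}. Thus $\O{i}$ is an open $AE(n)$-cover of $X$. As $Y$ is a metric space of dimension less than $n$ and $f, g \colon Y \to X$ are $\O{i}$-close, Lemma~\ref{lem:uclose uhomotopic} implies that $f$ and $g$ are $\O{i}$-homotopic.
\end{proof}
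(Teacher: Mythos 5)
Your proof is correct and follows exactly the same route as the paper: cite Theorem~\ref{thm:ane limit of polyhedra}(3) for the $AE(n)$-cover property and then apply Lemma~\ref{lem:uclose uhomotopic}. Your explicit verification that $\O{i}$ is open (via Lemma~\ref{lem:stars} and Lemma~\ref{lem:pullback properties}) is a small detail the paper leaves implicit, and it is a worthwhile addition since Lemma~\ref{lem:uclose uhomotopic} requires an \emph{open} $AE(n)$-cover.
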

\begin{proof}
  By Theorem~\ref{thm:ane limit of polyhedra}(3), $\O{i}$ is an $AE(n)$-cover of $X$. By Lemma~\ref{lem:uclose uhomotopic}, $f$ and $g$ are $\O{i}$-homotopic.
\end{proof}

%\printbibliography
\bibliographystyle{abbrv}
\bibliography{references2}

\end{document}